\numberwithin{equation}{section}
\theoremstyle{plain}
\newtheorem{theorem}{Theorem}[section]
\newtheorem{corollary}[theorem]{Corollary}
\newtheorem{proposition}[theorem]{Proposition}
\newtheorem{lemma}[theorem]{Lemma}
\theoremstyle{remark}
\newtheorem{remark}[theorem]{Remark}
\newtheorem{example}[theorem]{Example}
\newtheorem*{ack}{Acknowledgement}
\theoremstyle{definition}
\newtheorem{definition}[theorem]{Definition}
\newcommand{\GG}{\mathcal{G}}
\newcommand{\HH}{\mathcal{H}}
\newcommand{\R}{\mathbb{R}}
\newcommand{\N}{\mathbb{N}}
\newcommand{\PPP}{\mathbb{P}}
\newcommand{\iii}{\mathtt{i}}
\newcommand{\jjj}{\mathtt{j}}
\newcommand{\fii}{\varphi}
\newcommand{\roo}{\varrho}
\DeclareMathOperator{\dimb}{dim_B}
\DeclareMathOperator{\dimh}{dim_H}
\DeclareMathOperator{\dima}{dim_A}
\DeclareMathOperator{\vis}{Vis}
\DeclareMathOperator{\Tan}{Tan}
\DeclareMathOperator{\dist}{dist}
\DeclareMathOperator{\diam}{diam}
\DeclareMathOperator{\proj}{proj}
\DeclareMathOperator{\inter}{int}
\renewcommand{\atop}[2]{\genfrac{}{}{0pt}{}{#1}{#2}}
\newenvironment{labeledlist}[2][\unskip]
{ 
  
  \begin{enumerate} }
{ \end{enumerate} }
\begin{document}

\title{Visible part of dominated self-affine sets in the plane}

\author{Eino Rossi}
\address{[Eino Rossi]
        Department of Mathematics and Statistics, University of Helsinki \\
        P.O. Box 68  (Pietari Kalmin katu 5) \\
        00014 University of Helsinki\\
        Finland}
\email{eino.rossi@gmail.com}

\subjclass[2010]{28A80}
\keywords{Visible part, self-affine, weak tangent.}
\date{\today}
\thanks{The author was funded by the Academy of Finland through project Nos. 314829(Frontiers of singular integrals) and 309365(Quantitative rectifiability in Euclidean and non-Euclidean spaces).}

\begin{abstract}
  The dimension of the visible part of self-affine sets, that satisfy domination and a projection condition, is being studied. The main result is that the Assouad dimension of the visible part equals to 1 for all directions outside the set of limit directions of the cylinders of the self-affine set. The result holds regardless of the overlap of the cylinders. The sharpness of the result is also being discussed.
\end{abstract}

\maketitle


\section{Introduction}
For $e \in S^1$, let $\ell(e)$ denote the half line starting from origin and propagating to direction $e$. That is $\ell(e) =  \{ t e : t\geq 0 \}$. For a compact set $E\subset \R^2$ the visible part of $E$ in direction $e\in S^1$ is the set of points in $x\in \R^2$ that satisfy
\[
 ( \{x\} + \ell(e) ) \cap E = \{x\}.
\]
This set is denoted by $\vis^e E$. Let $\proj^e$ denote the orthogonal projection along the direction $e$. (Note that $\vis^e E$ may be different from $\vis^{-e} E$, but $\proj^e E = \proj^{-e} E$ always.) Consider the Hausdorff dimension of the visible part of a compact set $E$. If $\dimh E < 1$ then $\dimh \proj^e E = \dimh E$ for almost all $e\in S^1$ by Marstrand's projection theorem \cite{Marstrand1954}. Since $\vis^e E \subset E$ and $\proj^e \vis^e E = \proj^e E$, it follows that $\dimh \vis^e E = \dimh E$ for almost all $e\in S^1$. If $\dimh E \geq 1$, then still we have that $1 \leq \dimh \vis^e E$ for almost all $e\in S^1$, but the upper bound $\dimh \vis^e E \leq \dimh E$ should no longer be optimal for most $e\in S^1$. The visibility conjecture states that $\dimh \vis^e E = 1$ for almost all $e\in S^1$. Obviously one can not hope this to hold for all directions, since a graph of a function can have dimension greater than $1$ for example. Further, an example of Davies and Fast \cite{DaviesFast1978} shows that $\dimh \vis^e(K) = 2$ is possible for a dense $G_\delta$ set of directions. This is the furthest one can go, since recently Orponen \cite{Orponen2019vis} showed that it is impossible to have $\dimh \vis^e(K) = 2$ for set of directions of positive measure. It is rather easy to see that the visibility conjecture is false for the box counting dimension and thus for the Assouad dimension as well. This follows, since a countable set equals to its visible part for almost all directions and there exist compact countable sets with full box dimension. For example, one can simply consider $K = A \times A$, where $A=\{0\} \cup \{(S_n)^{-1}\}_{n=1}^\infty$ and $S_n = \sum_{k=1}^n 1/k$. For details, see Example \ref{ex:assouad_and_box}.

The visibility conjecture has been confirmed in a few special cases: J\"arvenp\"a\"a et.al. \cite{JarvenpaaEtAl2003} proved the conjecture for quasi-circles, Arhosalo et al. \cite{ArhosaloEtAl2012} confirmed that for fractal percolation the conjecture holds almost surely, and Falconer and Fraser \cite{FalconerFraser2013} showed that the conjecture holds for self-similar sets satisfying a projection condition and the open set condition so that the open set can be chosen to be convex. In all these cases, the authors actually verified the conjecture for the box dimension and for all directions $e\in S^1$. See also the recent work of J\"arvenp\"a\"a et.al. \cite{JarvenpaatSuomalaWu2021}.

One obvious variant of the problem is to consider the visible set from a given point instead of a direction. O'Neil \cite{ONeil2007} showed that for compact connected subsets of $\R^2$, the Hausdorff dimension of the visible part from a point $x\in \R^2$ is strictly less than the Hausdorff dimension of the original set, and it is uniformly bounded away from $2$, for almost all viewpoints $x$. An other related problem is to determine when $\vis^e E = E$. Orponen \cite{Orponen2014} showed that if $\dimh E > 1$, then the set of directions for which $\vis^e E = E$ has Hausdorff dimension at most $2-\dimh E$. On the other hand, it follows from the main result of \cite{RossiShmerkin2019}, that if $\dimb E < 1/2$, then $\vis^e E = E$ holds outside a set of directions of box dimension $2\dimb E$. For other related results, see for example \cite{Csornyei2000,BondLabaZahl2016,SimonSolomyak2006}.

In this paper I study the visible parts of self-affine sets. Domination and projection condition are standing assumptions throughout the paper. Theorem \ref{thm:general_dominated_vizibility} is the main result and it says that the Assouad dimension of the visible part equals to $1$ for all directions outside the set of the limit directions given by the affine dynamics. This theorem then has several corollaries. Corollary \ref{cor:carpet} says that for dominated self-affine carpets the Assouad dimension of the visible part equals to $1$ for all but two exceptional directions (that span the same line). Corollary \ref{cor:cone_visible} says that if the self-affine system satisfies the strong cone separation, then the Assouad dimension of the visible part equals to $1$ for almost all directions.  These results can be seen as rather strong, considering how easily the Assouad dimension jumps up in different situations. For example, it is well known that the fractal percolation has equal Hausdorff and box dimension $<2$ but full Assouad dimension, and Assouad dimension also tends to be maximal in projections in a way that is impossible for Hausdorff or box dimension \cite{Orponen2021proj}. Corollary \ref{cor:unique_visible} studies the case where the limit directions of the cylinders do not overlap too much, and states that the Hausdorff dimension of the visible part equals to $1$ for all directions in this case.

\begin{ack}
 I want to thank Bal\'{a}zs B\'{a}r\'{a}ny, Antti K\"aenm\"aki, and Tuomas Orponen for inspiring discussions on the topics of this paper. I also wish to thank the anonymous referee for the valuable comments on how to improve the quality of this paper.
\end{ack}

\section{Preliminaries and statement of the main result}
The purpose of this section is only to fix the setting of the paper and state the main result. In the next sections, along the course of the proof, I give more insight by explaining the geometry behind the assumptions and the result.

Throughout the paper, a direction means a unit vector $e\in S^1$ and orientation is an element of the projective space $\PPP^1$, that is, the metric space of lines in $\R^2$ that go through origin, and where the distance is measured by the angle between the lines. For a vector $e\in \R^2$, let $\left<  e \right> = \{ te : t\in\R \}$ denote the corresponding element of the projective space. It is sometimes more intuitive to think $S^1$ as a set of angles instead of unit vectors. This justifies the use of notations $\langle \theta \rangle := \{ t(\cos \theta,\sin\theta) : t\in\R \}$, $\proj^\theta := \proj^{(\cos \theta,\sin\theta)}$, $\vis^\theta := \vis^{(\cos \theta,\sin\theta)}$, and $\ell(\theta) := \ell((\cos\theta,\sin\theta))$ for $\theta \in\R$.

Let $A\colon \R^2\to \R^2$ be an invertible linear map, so that $A(B(0,1))$ is not a ball. Then $A(B(0,1))$ is an ellipse whose semiaxes have different lengths. Let $\alpha_1(A)$ be the length of the longer one of the semiaxes and let $\alpha_2(A)$ be the length of the shorter one. Equivalently $\alpha_k(A), k=1,2$ are the square roots of the eigenvalues of $A^T A$ (ordered so that the larger is $\alpha_1$). Also, set $\vartheta_1(A)\in \PPP^1$ to be the orientation of the longer semiaxes of $A(B(0,1))$. That is, $\vartheta_1(A) = \left< A \eta_1(A) \right>$, where $\eta_1(A)$ is the normalized eigenvector of $A^T A$ associated to the eigenvalue $\alpha_1(A)^2$. Likewise, set $\vartheta_2(A) = \left< A \eta_2(A) \right>$, where $\eta_2(A)$ is the normalized eigenvector corresponding to $\alpha_2(A)^2$. It is a basic fact that $\vartheta_1(A) \perp \vartheta_2(A)$ and $\eta_1(A) \perp \eta_2(A)$.

Let $\{A_i\}_{i=1}^\kappa$ be a collection of contractive invertible linear maps, let $\{c_i\}_{i=1}^\kappa$ be a collection of vectors in $\R^2$, and let $\fii_i (x)= A_i x + c_i$, for all $i\in \{1,\ldots, \kappa\}$. It standard that there exists a unique compact set $E$ satisfying
\[
 E = \bigcup_{i=1}^\kappa \fii_i(E).
\]
The set $E$ is called self-affine.

Set $\Sigma^* = \bigcup_{k\in\N} \{1,\ldots,\kappa\}^k$ and $\Sigma = \{1,\ldots,\kappa\}^\N$. Write $\Sigma^n$ for $\{1,\ldots,\kappa\}^n$ even though this is abusing the notation. Let $|\iii|$ denote the length of the word $\iii$. That is, $|\iii| = n$ whenever $\iii\in \Sigma^n$ and $|\iii|= \infty$, when $\iii\in\Sigma$. For $\iii,\jjj \in \Sigma$, let $\iii \wedge \jjj $ be the longest common beginning of $\iii$ and $\jjj$ and define a distance function $\roo$ in $\Sigma$ by setting $\roo(\iii,\jjj) = 2^{ -|\iii \wedge \jjj| }$, with the interpretation $2^{-\infty} = 0$. This makes $(\Sigma,\roo)$ a compact metric space.

For quantities $x$ and $y$, usually depending on $\iii$, the notation $x \lesssim y$ means that there is a constant $C>1$, that may depend on the self-affine set $E$, so that $x \leq C y$. Further, $x \approx y$ means that $x \lesssim y$ and $y \lesssim x$. 

For $\iii = (i_1,i_2,\ldots,i_n)\in \Sigma^*$, let $A_\iii = A_{i_1}A_{i_2}\dots A_{i_n}$ and for the sake of brevity, write $\alpha_k(\iii) = \alpha_k(A_\iii)$ and $\vartheta_k(\iii) = \vartheta_k(A_\iii)$ for $k=1,2$. Similarly, also write $\fii_\iii = \fii_{i_1}\circ \fii_{i_2} \circ \dots \circ \fii_{i_n}$. The line $\vartheta_1(\iii)$ is called the orientation of the cylinder $\fii_\iii(E)$, because the cylinder $\fii_\iii(E)$ is ``close'' to being a line segment that is parallel to the line $\vartheta_1(\iii)$, at least when $|\iii|$ is large. This phenomena is examined in more detail in Proposition \ref{prop:general_weak}. As usual, let $\pi\colon \Sigma \to E$ be the canonical projection defined by
\[
 \{\pi\iii\} = \bigcap_{n=1}^\infty \fii_{\iii|_n}(E).
\]
From time to time, the set $\fii_{\iii|_n}(E)$ is also denoted by $E_{\iii|_n}$. The system $\{A_i\}_{i=1}^\kappa$ is called dominated, or said to satisfy dominated splitting, if there are constants $\tau > 1$ and $n_0 \in \N$, so that $\alpha_1(\iii)  > \tau^{|\iii|} \alpha_2(\iii)$ for all $\iii \in \Sigma^*$, with $|\iii| \geq n_0$. Domination ensures the existence of the limit orientation for all symbols $\iii \in \Sigma$. The next lemma records this fact along with other useful properties of the limit orientations.

\begin{lemma}
 \label{lem:directions}
 Let $E$ be a dominated self-affine set. Then
 \begin{enumerate}
  \item\label{existence} $\vartheta_1(\iii) = \lim_{n \to \infty} \vartheta_1(\iii|_n)$ exists for all $\iii\in\Sigma$ and the convergence is uniform.
  \item\label{continuity} The map $\vartheta_1\colon \Sigma \to \PPP^1$ is uniformly continuous.
  \item\label{accumulation} $\vartheta_1(\Sigma)$ contains the accumulation points of the set $\{ \vartheta_1(\iii) : \iii\in\Sigma^* \}$.
  \item\label{invariance} $A_{\iii}\vartheta_1(\jjj) = \vartheta_1(\iii\jjj)$ for all $\iii\in\Sigma^*$ and $\jjj\in\Sigma$.
 \end{enumerate}
\end{lemma}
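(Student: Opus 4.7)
The plan is to derive all four items from one quantitative ingredient. For an invertible $2\times2$ matrix $A$, the singular value decomposition shows that if $v\in S^1$ satisfies $|\la v,\eta_1(A)\ra|\geq\delta$ then
\[
d_{\PPP^1}\bigl(\la Av\ra,\vartheta_1(A)\bigr)\leq\frac{C\alpha_2(A)}{\alpha_1(A)\delta},
\]
since writing $v=\cos\beta\cdot\eta_1(A)+\sin\beta\cdot\eta_2(A)$ with $|\cos\beta|\geq\delta$, the angle between $Av$ and $\vartheta_1(A)$ equals $\arctan((\alpha_2/\alpha_1)\tan\beta)$. Combined with domination $\alpha_2(\iii)/\alpha_1(\iii)\leq\tau^{-|\iii|}$, this says the projective action of $A_\iii$ contracts $\PPP^1$ toward $\vartheta_1(\iii)$ at rate $\tau^{-|\iii|}$ whenever the input is kept at positive distance from the stable line $\la\eta_2(\iii)\ra$. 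The proof of each item then reduces to verifying such a uniform separation, which ultimately rests on the classical fact that dominated splittings admit an invariant cone field (Avila--Bochi--Yoccoz, Bochi--Gourmelon).

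For (1), factor $A_{\iii|_n}=A_{\iii|_m}B$ with $B=A_{i_{m+1}\cdots i_n}$ and set $w_n=\la B\eta_1(\iii|_n)\ra\in\PPP^1$, so that $\vartheta_1(\iii|_n)=A_{\iii|_m}w_n$ while $\vartheta_1(\iii|_m)=A_{\iii|_m}\la\eta_1(\iii|_m)\ra$. Invoking the invariant cone, once $n-m$ is sufficiently large $w_n$ lies in a common closed proper arc $\bar\CC\subsetneq\PPP^1$ whose $\PPP^1$-distance from $\la\eta_2(\iii|_m)\ra$ is at least a uniform $\delta_0>0$. The key estimate then gives $d_{\PPP^1}(\vartheta_1(\iii|_n),\vartheta_1(\iii|_m))\leq C\tau^{-m}/\delta_0$, so $\{\vartheta_1(\iii|_n)\}$ is Cauchy at a rate uniform in $\iii$, proving (1). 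Item (2) is immediate: if $\roo(\iii,\jjj)=2^{-n}$ then $\iii|_n=\jjj|_n$, and the triangle inequality yields $d_{\PPP^1}(\vartheta_1(\iii),\vartheta_1(\jjj))\leq 2C\tau^{-n}/\delta_0$, i.e.\ Hölder continuity. Item (3) follows from (2) and compactness: if distinct $\iii_k\in\Sigma^*$ satisfy $\vartheta_1(\iii_k)\to\xi$ then $|\iii_k|\to\infty$ (only finitely many words of any given length exist); extending each to $\tilde\iii_k\in\Sigma$ and passing to a $\roo$-convergent subsequence $\tilde\iii_{k_j}\to\tilde\iii$, the uniform rate from (1) together with (2) forces $\xi=\vartheta_1(\tilde\iii)\in\vartheta_1(\Sigma)$.

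The main obstacle is (4). Fix $\iii\in\Sigma^*$, $\jjj\in\Sigma$, and set $B_n=A_{\jjj|_n}$. By (1), $\vartheta_1(\iii\jjj|_n)\to\vartheta_1(\iii\jjj)$, while $\vartheta_1(\iii\jjj|_n)=A_\iii v_n$ with $v_n=\la B_n\eta_1(\iii\jjj|_n)\ra$. Since $A_\iii$ acts continuously on $\PPP^1$, it suffices to show $v_n\to\vartheta_1(\jjj)$; then $A_\iii v_n\to A_\iii\vartheta_1(\jjj)$ identifies the limit. To apply the key estimate to $B_n$, we need $\eta_1(\iii\jjj|_n)$ bounded away from $\la\eta_2(\jjj|_n)\ra$ uniformly in $n$. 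The separation comes from a singular value comparison: testing $A_\iii B_n$ on $\eta_1(\jjj|_n)$ gives $\alpha_1(\iii\jjj|_n)\geq\alpha_2(\iii)\alpha_1(\jjj|_n)$, while $|\la\eta_1(\iii\jjj|_n),\eta_1(\jjj|_n)\ra|^2\leq\eps$ would force $\alpha_1(\iii\jjj|_n)\leq\alpha_1(\iii)\sqrt{\eps\,\alpha_1(\jjj|_n)^2+\alpha_2(\jjj|_n)^2}$. Combining these and using $\alpha_2(\jjj|_n)/\alpha_1(\jjj|_n)\leq\tau^{-n}$ forces $\eps\gtrsim(\alpha_2(\iii)/\alpha_1(\iii))^2$ for large $n$, yielding an $\iii$-dependent positive lower bound. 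With this separation in hand the key estimate applied to $B_n$ gives $d_{\PPP^1}(v_n,\vartheta_1(\jjj|_n))\lesssim\tau^{-n}$, and $\vartheta_1(\jjj|_n)\to\vartheta_1(\jjj)$ by (1), so $v_n\to\vartheta_1(\jjj)$ and (4) follows.
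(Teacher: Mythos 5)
Your key estimate is correct, and your item (4) is essentially the paper's own argument: the paper also tests the product on $\eta_1(\jjj|_n)$ to get $\alpha_1(\iii\jjj|_n)\geq\alpha_2(\iii)\alpha_1(\jjj|_n)$ and deduces that $\eta_1(\iii\jjj|_n)$ makes a definite, $\iii$-dependent angle with $\eta_2(\jjj|_n)$, after which the projective contraction of $A_{\jjj|_n}$ finishes the proof. The gap is in item (1), on which your (2) and (3) quantitatively depend. There you need the separation constant $\delta_0$ to be uniform in $m$, $n$ and $\iii$, and you obtain it by asserting that $w_n=\langle B\eta_1(\iii|_n)\rangle$ lies in a fixed proper arc bounded away from $\langle\eta_2(\iii|_m)\rangle$, ``invoking the invariant cone''. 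This does not hold up as stated. First, under domination alone (the lemma does not assume the strong cone separation) the invariant object supplied by Avila--Bochi--Yoccoz/Bochi--Gourmelon is in general a strictly invariant \emph{multicone}, itself a substantial imported theorem. More importantly, cone invariance only controls $\langle Bv\rangle$ for directions $v$ \emph{inside} the cone, and $\eta_1(\iii|_n)$, the top singular direction of the full product $A_{\iii|_m}B$, is not known to lie there: it is naturally attached to the multicone of the transpose cocycle, a different set, and $B$ maps directions near $\langle\eta_2(B)\rangle$ far from $\vartheta_1(B)$, possibly outside the cone. So ``$w_n$ lies in a common arc uniformly far from $\langle\eta_2(\iii|_m)\rangle$'' is exactly the nontrivial transversality you would have to prove, and the singular-value test that works in (4) does not rescue it: run with head $A_{\iii|_m}$ and tail $B$ it only yields the vacuous inequality $\alpha_2(\iii|_m)\leq\bigl(t^2\alpha_1(\iii|_m)^2+\alpha_2(\iii|_m)^2\bigr)^{1/2}$, giving no lower bound on $t$.

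The separation you need is in fact true, but it requires a genuinely different input, for instance the almost-multiplicativity of norms for dominated planar products, $\|A_{\iii|_m}B\|\geq c\,\|A_{\iii|_m}\|\,\|B\|$, which gives $|A_{\iii|_m}u|\geq c\,\alpha_1(\iii|_m)$ for the unit vector $u$ spanning $w_n$ and hence a uniform $\delta_0$; alternatively one must carry out a careful multicone argument showing both that $\eta_2(\kkk)$ of long words lies uniformly outside the multicone and that $w_n$ lies uniformly inside it. The paper avoids all of this: for (1) it cites the proof of Lemma 2.1 of K\"aenm\"aki--Koivusalo--Rossi (2017), noting that domination makes that argument uniform in $\iii$; items (2) and (3) are then soft consequences, and only (4) is proved directly, in the way you do. As written, your proof of (1) -- and with it the uniform continuity in (2) and the Cauchy-rate argument behind (3) -- rests on an unproved uniform transversality claim, so the proposal has a genuine gap there.
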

\begin{proof}
 The proof of \eqref{existence} is a direct modification of \cite[Lemma 2.1]{KaenmakiKoivusaloRossi2017}, where the existence of the limit in question is showed for almost all $\iii$. The proof there works for individual $\iii\in \Sigma$ for which
 \[
  \liminf_{n\to\infty}-\frac{1}{n} \log \frac{\alpha_2(\iii|_n)}{\alpha_1(\iii|_n)} > 0.
 \]
 In the setting of this paper, the domination implies the uniform bound $\log\tau > 0$ for the above liminf. The uniform bound also implies that the convergence is uniform. The part \eqref{continuity} follows from \eqref{existence}, and \eqref{accumulation} follows from \eqref{continuity} and compactness of $\Sigma$.
 
 To prove \eqref{invariance} it suffices to show that $A_\iii^{-1} \vartheta_1(\iii\jjj|_n)$ converges to $\vartheta_1(\jjj)$ as $n\to \infty$, since $A_{\iii}$ is a diffeomorphism. Write
 \[
  \eta_1(\iii\jjj|_n) = t_n \eta_1(\jjj|_n) + s_n \eta_2(\jjj|_n),
 \]
 and for now let $\theta_k(\jjj|_n)\in S^1$ be a unit vector with $\left< \theta_k(\jjj|_n) \right> = \vartheta_k(\jjj|_n)$ for $k=1,2$. Then it follows from domination that
 \begin{align*}
  A_\iii^{-1} \vartheta_1(\iii\jjj|_n)
  &=
  \left< A_\iii^{-1} A_\iii A_{\jjj|_n} \eta_1(\iii\jjj|_n) \right>
  =
  \left< t_n \alpha_1(\jjj_n)\theta_1(\jjj|_n) + s_n \alpha_2(\jjj_n)\theta_2(\jjj|_n) \right> \\
  &=
  \left< \theta_1(\jjj|_n) +  \frac{s_n\alpha_2(\jjj|_n)}{t_n\alpha_1(\jjj|_n)}\theta_2(\jjj|_n) \right>
  \to
  \vartheta_1(\jjj)
 \end{align*}
 as long as $t_n$ stays bounded away from zero. To show that it does, recall that $|A_{\iii\jjj|_n}\eta_1(\iii\jjj|_n)| = \max_{v\in S^1} |A_{\iii\jjj|_n} v|$. In particular,
 \[
  |A_{\iii\jjj|_n}\eta_1(\jjj|_n)| \leq |A_{\iii\jjj|_n}\eta_1(\iii\jjj|_n)|,
 \]
 where the left hand side is at least $\alpha_2(\iii) \alpha_1(\jjj|_n)$ and the right hand side is at most
 \[
  \alpha_1(\iii) |t A_{\jjj|_n}\eta_1(\jjj|_n) + s A_{\jjj|_n}\eta_2(\jjj|_n)| \leq
  \alpha_1(\iii) | t_n \alpha_1(\jjj|_n) + s_n\alpha_2(\jjj|_n) |.
 \]
 Thus the triangle inequality gives
 \[
  \frac{\alpha_2(\iii)}{\alpha_1(\iii)}
  \leq
  |t_n| + |s_n| \frac{\alpha_2(\jjj|_n)}{\alpha_1(\jjj|_n)}
 \]
 and so the domination implies that $|t_n| \geq 2^{-1} \alpha_2(\iii)  \alpha_1(\iii)^{-1}$ for large $n$.
\end{proof}
In addition to domination, a crucial assumption in this paper is the following projection condition.
\begin{definition}
 An affine IFS $\{\fii_i\}$ (or the invariant set $E$) satisfies the projection condition if $\PPP^1\setminus \{\vartheta_1(\jjj): \jjj\in\Sigma\} \neq \emptyset$ and if for all $e\in S^1$ with $\langle e \rangle\in \PPP^1\setminus \{\vartheta_1(\jjj): \jjj\in\Sigma\}$, there is $n_0$ so that $\proj^e \fii_\iii(E)$ is a non-trivial interval for all $\iii\in\Sigma^n$ and $n\geq n_0$.
\end{definition}
\begin{remark}
\label{proj_cond_rem}
 To check the projection condition in a specific case, it may be useful to note that affinie mappings preserve lines and convex hulls. (The convex hull of a set $E$ is the smallest convex set containing $E$.) That is, if $\ell$ is a line in $\R^2$ and $K$ is the convex hull of $E$ and $A$ is an invertible affine map, then $A\ell$ is also a line and $AK$ is the convex hull of $AE$. Asking if $\proj^e \fii_\iii(E)$ is an interval, is equivalent to asking if $\proj^{A_\iii^{-1}e}(E)$ is an iterval. Further, $\proj^{A_\iii^{-1}e}(E)$ is an interval if and only if every line $\ell$ of the form $\{y\} + \langle A_\iii^{-1} e \rangle$ that meets $K$ also meets $E$.
\end{remark}

As said, the purpose is to study the dimension of the visible part. I assume that the reader is familiar with basic notions of dimension. The Hausdorff dimension is denoted by $\dimh$, the box dimension by $\dimb$, and the Assouad dimension by $\dima$. The definitions of Hausdorff and box dimension one can find from almost any text book of fractal geometry or geometric measure theory (see for example \cite{Mattila1995}), and for Assouad dimension one can check for example \cite{Luukkainen1998}. If the reader is not interested in the Assouad dimension, then I just want to remark that the results are new also for the Hausdorff dimension and that the versions with Assouad dimension are just stronger since $\dimh K \leq \dima K$ for all sets $K$.

The main result is the following theorem.

\begin{theorem}
 \label{thm:general_dominated_vizibility}
 Let $E$ be a self-affine set satisfying the projection condition and the dominated splitting. Then $\dimh \vis^e(E) = \dima \vis^e(E) = 1$ for all $e \in S^1$ with $\langle e \rangle \not\in\{ \vartheta_1(\iii) : \iii\in \Sigma \}$.
\end{theorem}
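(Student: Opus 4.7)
The plan is to establish $\dimh \vis^e E \geq 1$ directly from the projection condition and then obtain $\dima \vis^e E \leq 1$ via a weak-tangent analysis, combining uniform transversality of cylinder orientations (from Lemma \ref{lem:directions} and compactness of $\Sigma$) with the description of weak tangents of dominated self-affine sets supplied by Proposition \ref{prop:general_weak}.

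For the lower bound, picking any $\iii \in \Sigma^{n_0}$, the set $\proj^e \fii_\iii(E) \subseteq \proj^e E$ is a non-trivial interval $I$; for each $y\in I$ the ray $\{y\}+\ell(e)$ meets $E$ in a nonempty compact set whose first-hit point in direction $-e$ lies in $\vis^e E$, so $\proj^e \vis^e E \supseteq I$ and consequently $\dimh \vis^e E \geq 1$ since $\proj^e$ is $1$-Lipschitz.

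For the upper bound, I first upgrade the hypothesis to a quantitative transversality. Since $\vartheta_1\colon\Sigma\to\PPP^1$ is continuous (Lemma \ref{lem:directions}\eqref{continuity}) and $\Sigma$ is compact, $\vartheta_1(\Sigma)$ is closed, so the assumption on $e$ yields $\delta>0$ with $d(\langle e\rangle,\vartheta_1(\jjj))\geq 2\delta$ for all $\jjj\in\Sigma$. I then claim there is $n_1$ with $d(\langle e\rangle,\vartheta_1(\iii))\geq \delta$ for every $\iii\in\Sigma^*$ of length $\geq n_1$: otherwise, extracting $\iii_k\in\Sigma^*$ with $|\iii_k|\to\infty$ and $\vartheta_1(\iii_k)\to\langle e\rangle$, extending each to $\jjj_k\in\Sigma$, passing to a convergent subsequence $\jjj_k\to\jjj$, and combining the uniform convergence from Lemma \ref{lem:directions}\eqref{existence} with the continuity of $\vartheta_1|_\Sigma$ would force $\vartheta_1(\jjj)=\langle e\rangle$, contradicting the $2\delta$-separation.

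Now I appeal to the weak-tangent characterization of Assouad dimension: $\dima \vis^e E\leq 1$ provided every weak tangent $W$ of $\vis^e E$ has $\dimh W\leq 1$. Such a $W$ arises as a Hausdorff limit of rescaled pieces $T_n(\vis^e E \cap B(x_n,R_n))$ with $R_n\to 0$. Proposition \ref{prop:general_weak} controls the corresponding rescaled pieces of $E$: in the dominated setting they converge to finite unions of line segments with orientations in $\vartheta_1(\Sigma)$, because each cylinder $\fii_\iii(E)$ is a tube of width $\lesssim\alpha_2(\iii)$ along $\vartheta_1(\iii)$ with ratio $\alpha_2(\iii)/\alpha_1(\iii)\to 0$ exponentially by domination. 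By uniform transversality every such limit segment makes angle $\geq\delta$ with $\langle e\rangle$, and the visible part from direction $e$ of a finite union of transversal line segments is itself a finite union of (sub)segments, hence $\dimh W\leq 1$. The main obstacle lies in the passage to the limit inside the $\vis^e$ operation, which is not continuous under Hausdorff convergence in general: one must verify rigorously that the visibility condition survives the blow-up, exploiting both the uniform transversality (so that limit orientations do not collapse onto $\langle e\rangle$) and the exponential tube-flattening from domination (so that the tubes genuinely degenerate to line segments). Without either hypothesis the argument breaks down, consistent with the fragility of Assouad dimension under taking visible parts illustrated by Example \ref{ex:assouad_and_box}.
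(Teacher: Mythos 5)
Your lower bound and the general strategy (weak-tangent characterization of Assouad dimension plus quantitative separation of $\langle e\rangle$ from $\vartheta_1(\Sigma)$) agree with the paper, which, after noting $\dimh\vis^e E\ge 1$ from the projection condition, reduces everything to Proposition \ref{prop:general_weak_dimension} applied to $\overline{\vis^e E}$ (the closure matters, since the visible part need not be compact). But your treatment of the upper bound has two genuine gaps. First, you misstate Proposition \ref{prop:general_weak}: it does \emph{not} say that blow-ups of $E$ converge to finite unions of line segments. It says a weak tangent of $E$ is a Kakeya type set, i.e.\ $D\cap B(0,1)$ with $D=\bigcup_{x\in X}(\{x\}+\ell(\theta_x))$ over a possibly uncountable family of rays with directions in $\vartheta_1(\Sigma)$; for instance, in Example \ref{ex:sharp} a weak tangent is $C\times[0,1]$ with $C$ the middle-thirds Cantor set. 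Consequently your key reduction, ``the visible part of a finite union of transversal segments is a finite union of subsegments,'' does not apply. Bounding $\dimh$ of the visible part of an arbitrary transversal Kakeya type set is itself nontrivial: the paper does it by splitting the line family into lines crossing $B(0,1)$, rays opening to the right, and rays opening to the left, and showing that inside a strip the visible part is contained in the graphs of one Lipschitz and two semi-monotone functions, whose graphs have dimension $1$ by Lemma \ref{lem:semimonotone}.

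Second, the step you yourself flag as ``the main obstacle''---that visibility survives the blow-up---is exactly the content of Proposition \ref{prop:general_weak_dimension}, and you leave it unproved. The paper explicitly warns that if $W$ is a weak tangent of $\overline{\vis^e E}$ and $T$ the corresponding weak tangent of $E$, then in general neither $W\subset\overline{\vis^e T}$ nor $W\supset\overline{\vis^e T}$ holds, so one cannot simply bound $\dimh\vis^e T$ and be done. What the paper actually proves is a covering statement: $W\cap\vis^e T'$ lies in the three graphs above, while $W\setminus\vis^e T'$ is contained in countably many vertical lines, parametrized by the (countably many) discontinuity points of the semi-monotone functions and the endpoints of the projection intervals; the latter step uses the projection condition together with the approximating rectangles $R_n$ converging to the lines of $T$ to show that a point of $W$ lying strictly above a non-visible point of $T'$ would force the approximating points $\omega^n\in M_{x_n,r_n}(\vis^e E)$ to be non-visible, a contradiction. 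Without an argument of this kind (transversality and domination alone do not supply it---the projection condition is essential here), your proposal does not establish $\dimh W\le 1$, and hence not $\dima\vis^e E\le 1$.
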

The proof goes via weak tangents, defined as follows. Let $F_n$ be a sequence of compact sets in $\R^2$. Say that $F_n$ converges to a compact set $F\subset \R^2$ in $B(0,R)$, if
 \[
  \sup\{ \dist \left( F_n , x  \right) : x\in F \cap B(0,R) \} \to 0 \text{ as } n\to\infty
 \]
 and
 \[
  \sup\{ \dist \left(  x , F  \right) : x \in F_n \cap B(0,R) \} \to 0 \text{ as } n\to\infty.
 \]
For $x\in \R^2$ and $r>0$ we write $M_{x,r}$ for the magnification function that shifts $x$ to origin and scales with factor $r^{-1}$. That is
\[
 M_{x,r}(y) = \frac{y-x}{r}.
\]
Let $X\subset \R^2$ be compact. Then $W \subset B(0,1)$ is said to be a weak tangent of $X$, and written $W\in\Tan(X)$, if $M_{x_n,r_n}(X)$ converges to $W$ in $B(0,1)$ for some sequences $(x_n) \subset X$ and $r_n \searrow 0$. It is typical to consider the weak tangents as subsets of the unit ball (or the unit square), but this is just a convenient choice. One could as well consider the convergence in $B(0,R)$ for any fixed $R>0$ or for all $R>0$ to allow the weak tangents to be unbounded as well.

Assuming Proposition \ref{prop:general_weak_dimension}, which deals with the dimension of the weak tangents of the visible part, its proof of Theorem \ref{thm:general_dominated_vizibility} is rather simple.

\begin{proof}[Proof of theorem \ref{thm:general_dominated_vizibility}]
 Consider $e\in S^1$ with $\langle e \rangle \not\in \{ \vartheta_1(\iii) : \iii\in \Sigma \}$. By the projection condition, it holds that $\proj^e E_\iii$ is an interval for some $\iii \in \Sigma^*$, so $\dimh \vis^e E \geq 1$. Thus the task is to prove the upper bound. Proposition \ref{prop:general_weak_dimension} says that $\dimh W \leq 1$ for all weak tangents $W$ of $\overline{\vis^e E}$ (the closure is needed since the visible part is not necessarily closed). Recalling that the Assouad dimension of a compact set equals to the maximum of the Hausdorff dimensions of its weak tangents \cite[Proposition 5.8]{KOR}, it then follows that $\dimh \vis^e E \leq \dima \overline{\vis^e E} \leq \dimh W \leq 1$, were $W$ is a weak tangent of $\overline{\vis^e E}$ with maximal Hausdorff dimension.
\end{proof}

Considering the proof of Proposition \ref{prop:general_weak_dimension}, it is obvious that if $W$ is a weak tangent of $\overline{\vis^e E}$ with $M_{x_n,r_n}\overline{\vis^e E} \to W$ in $B(0,1)$, then (by passing to a subsequence if necessary) it also holds that $(M_{x_n,r_n} E) \cap B(0,1) $ converges to a weak tangent, say $T$, of $E$. Of course $W\subset T$, but unfortunately, it is not generally true that $W \subset \overline{\vis^e T}$ or $W \supset \overline{\vis^e T}$. In particular, one can not just take the weak tangent $T$ of $E$ of maximal dimension and expect it to have anything to do with the weak tangent of $\overline{\vis^e E}$ of maximal dimension. See example \ref{ex:sharp}. Instead, the strategy is to use the structure of the self-affine set and the weak tangents obtained in Section \ref{sec:weaktan} to show that $W \cap \overline{\vis^e T}$ can be covered by graphs of few well behaving functions and that $W \setminus \overline{\vis^e T}$ can be covered by a countable collection of lines. The arguments about visibility rely heavily on the projection condition.


The visibility conjecture asks if $\dimh \vis^\theta(E) = 1$ for almost all $e\in S^1$. So, the remaining step to confirm the conjecture in some special case, is to show that $\vartheta_1(\Sigma)$ is of measure zero (or technically, that the set $e\in\{ \langle e \rangle \in \vartheta_1(\Sigma) \}$ is of measure zero). Theorem \ref{cor:cone_visible} deals with this in the case where the self-affine system also satisfies the strong cone separation. In section \ref{sec:final}, I give an example where the visible part has large dimension in directions $\vartheta_1(\Sigma)$, showing that Theorem \ref{thm:general_dominated_vizibility} is sharp.

\section{Weak tangents of dominated self-affine sets}
\label{sec:weaktan}
This section deals with the structure of the weak tangent sets of self-affine sets satisfying the projection condition and dominated splitting. Recall that no separation conditions are required. The structure of tangents of self-affine sets under separation conditions has been studied in \cite{BandtKaenmaki2013,KaenmakiKoivusaloRossi2017,Mackay,KOR} for example.

%
To study the local structure of self-affine sets it is convenient to approximate the cylinders $\fii_\iii(E)$ by rectangles. The domination ensures that $\alpha_2(\iii) / \alpha_1(\iii) \to 0$ uniformly as $|\iii| \to \infty$. Therefore the approximation of $\fii_\iii(E)$ can be done with a ``very narrow'' rectangle if $|\iii|$ is large. This motivates the following definition.

\begin{definition}
\label{def:approxrectangle}
 For $\iii\in \Sigma^*$ define the approximating rectangle $R(x,r,\iii)$ to be the smallest closed rectangle that includes $M_{x,r}(E_\iii)$ and has sides parallel to $\vartheta_1(\iii)$ and $\vartheta_2(\iii)$. For any approximating rectangle $R$, the length of the sides parallel to $\vartheta_1(\iii)$ is denoted by $h(R)$ and the length of the sides parallel to $\vartheta_2(\iii)$ is denoted by $v(R)$. The orientation of cylinder $E_\iii$ is also called the orientation of the approximating rectangle $R(x,r,\iii)$.
\end{definition}

\begin{lemma}
 \label{lem:approx_rectengles}
 Let $E$ be a self-affine set satisfying the projection condition and the dominated splitting. Let $W$ be a weak tangent of $E$ with $M_{x_n,r_n}(E) \to W$ in $B(0,1)$ and let $x\in W$.
 
 Then there exists a sequence $\iii_n \in \Sigma^*$ of finite words and a sequence $R_n := R_n(x_n,r_n,\iii_n)$ of approximating rectangles so that $h_n := h(R_n) \to \infty$ and $v_n := v(R_n) \to 0$ and $\dist(R_n,x)\to 0$.
\end{lemma}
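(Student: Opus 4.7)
The plan is to use the Hausdorff convergence $M_{x_n,r_n}(E)\to W$ to lift $x$ to an honest point of $E$, encode it symbolically, and truncate the code at the scale where the corresponding cylinder looks like a long thin rectangle relative to $r_n$. Dominated splitting will provide exactly the quantitative room to make the long side of the rectangle diverge while the short side shrinks after magnification.

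First I pick $y_n\in M_{x_n,r_n}(E)$ with $y_n\to x$, set $p_n:=x_n+r_n y_n\in E$, and write $p_n=\pi(\omega_n)$ for some $\omega_n\in\Sigma$. Denoting $\alpha_{\max}:=\max_i\alpha_1(A_i)<1$ and $\alpha_{\min}:=\min_i\alpha_2(A_i)>0$, I fix a small $\delta\in(0,1)$ with $(1-\delta)\log\tau>\delta|\log\alpha_{\min}|$ (possible because $\log\tau>0$) and take $\iii_n$ to be the longest prefix of $\omega_n$ with $\alpha_1(\iii_n)\ge r_n^{1-\delta}$. Submultiplicativity $\alpha_1(\iii j)\le\alpha_{\max}\alpha_1(\iii)$ makes this prefix well-defined once $r_n$ is small, while the singular value inequality $\alpha_1(\iii j)\ge\alpha_{\min}\alpha_1(\iii)$ applied to the next letter of $\omega_n$ gives the matching upper bound $\alpha_1(\iii_n)\le r_n^{1-\delta}/\alpha_{\min}$. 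Together with $\alpha_{\min}^{|\iii_n|}\le\alpha_1(\iii_n)$, this already forces $|\iii_n|\gtrsim|\log r_n|\to\infty$.

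Next I verify the three conclusions. A singular value decomposition of $A_{\iii_n}$ shows that the widths of $E_{\iii_n}$ along $\vartheta_1(\iii_n)$ and $\vartheta_2(\iii_n)$ are $\alpha_1(\iii_n)$ and $\alpha_2(\iii_n)$, respectively, times the widths of $E$ along the right singular vectors $\eta_1(\iii_n),\eta_2(\iii_n)$. Discarding the trivial case where $E$ lies on a line, the width function of $E$ on $S^1$ is uniformly bounded below by some $w>0$, so
\[
h(R_n)\ge w\,\alpha_1(\iii_n)/r_n\ge w\,r_n^{-\delta}\to\infty \quad\text{and}\quad v(R_n)\le\diam(E)\,\alpha_2(\iii_n)/r_n.
\]
Feeding the domination estimate $\alpha_2(\iii_n)\le\tau^{-|\iii_n|}\alpha_1(\iii_n)$ into the bound for $v(R_n)$ and using the estimates above yields $v(R_n)\lesssim r_n^{(1-\delta)\log\tau/|\log\alpha_{\min}|-\delta}\to 0$, the exponent being positive by the choice of $\delta$. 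Finally, $p_n\in E_{\iii_n}$ gives $y_n=M_{x_n,r_n}(p_n)\in R_n$, hence $\dist(R_n,x)\le|y_n-x|\to 0$.

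The main obstacle is the coupled scale-balancing in the preceding paragraph: $\alpha_1(\iii_n)/r_n$ must diverge while $\alpha_2(\iii_n)/r_n$ simultaneously vanishes. The single exponential gap $\tau^{-|\iii|}$ provided by dominated splitting supplies just enough slack, and $\delta$ is the tuning knob that makes the trade-off precise; once $\delta$ is chosen small relative to $\log\tau/|\log\alpha_{\min}|$, the remaining estimates are essentially bookkeeping.
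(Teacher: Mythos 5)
Your proof is correct and follows essentially the same route as the paper: lift $x$ to points of $E$ via the magnifications, code them symbolically, truncate the coding at a scale tied to $r_n$, and let domination force the aspect ratio of the approximating rectangle $R(x_n,r_n,\iii_n)$ to degenerate. The only difference is bookkeeping: the paper stops where $\alpha_2(\iii_n)\approx r_n/n$, so $v_n\to 0$ holds by construction and $h_n\to\infty$ follows from domination, whereas you stop where $\alpha_1(\iii_n)\approx r_n^{1-\delta}$ and obtain the complementary implication — both are fine.
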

\begin{proof}
 Since $x \in W$, there exists sequences $(z_n) \subset B(0,1)$ and $(\iii_n) \subset \Sigma^*$ so that $z_n \in M_{x_n,r_n}(E_{\iii_n})$ and $z_n \to x$. Furthermore, $\iii_n$ can be chosen so that $\alpha_2(\iii_n) \approx r_n/n$. By setting $R_n = R(x_n,r_n,\iii_n)$ it is obvious that $\dist(R_n,x)\to 0$. Note also that $\alpha_1(\iii_n) \approx r_n h_n$ and $\alpha_2(\iii_n) \approx r_n v_n$. By domination, there exist $\tau > 1$, so that $\alpha_1(\iii_n) \geq \tau^n \alpha_2(\iii_n)$ for large $n$. Thus,
 \[
  v_n \approx \alpha_2(\iii_n) / r_n \approx 1/n \to 0
 \]
 and
 \[
  h_n \approx \alpha_1(\iii_n) / r_n  \geq \tau^{n} \alpha_2(\iii_n) / r_n \approx \tau^{n} /n \to \infty.
 \]
\end{proof}
After the previous lemma, it is intuitive that the weak tangent contains lines and half lines pointing in different directions. Due to the obvious connection, it is natural to call such sets Kakeya type sets.

\begin{definition}
 Let $X \subset \R^2$ and fix $\theta_x \in S^1$ for all $x\in X$. A set of the form
 \[
  \bigcup_{x\in X} \{x\} + \ell(\theta_x)
 \]
 is called a Kakeya type set. The collection $\{ \theta_x \}_{x\in X}$ is called the direction set of the Kakeya type set.
\end{definition}

\begin{proposition}
 \label{prop:general_weak}
 Let $E$ be a self-affine set satisfying the projection condition and the dominated splitting, and let $W$ be a weak tangent of $E$. Then $W = D \cap B(0,1)$, where $D$ is a Kakeya type set with direction set $\Lambda$, that satisfies $\langle \theta \rangle \in \vartheta_1(\Sigma)$ for all $\theta \in \Lambda$.
\end{proposition}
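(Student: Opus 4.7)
The plan is to show that $W$ contains a half-line from each of its points in a direction lying over $\vartheta_1(\Sigma)$. Fix $x \in W$. Lemma~\ref{lem:approx_rectengles} supplies words $\iii_n \in \Sigma^*$ and approximating rectangles $R_n$ of dimensions $h_n \times v_n$ with $h_n \to \infty$, $v_n \to 0$, $\dist(R_n, x) \to 0$, and (from its proof) a point $p_n \in M_{x_n,r_n}(E_{\iii_n})$ with $p_n \to x$. Passing to a subsequence, the orientations $\vartheta_1(\iii_n)$ converge in $\PPP^1$ to some $\vartheta$, and Lemma~\ref{lem:directions}\eqref{accumulation} yields $\vartheta \in \vartheta_1(\Sigma)$. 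The two segments of the long axis of $R_n$ on either side of $p_n$ have lengths $u_n^\pm$ summing to $h_n \to \infty$, so after a further subsequence I may assume $u_n^+ \to \infty$; I take $\theta_x$ to be the unit vector in direction $\vartheta$ pointing to this side.

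For $t \geq 0$ with $y := x + t\theta_x \in B(0,1)$, let $\vec{\theta}_n$ be the unit vector along $\vartheta_1(\iii_n)$ aligned with $\theta_x$, and set $y_n := p_n + t\vec{\theta}_n$; then $y_n \to y$, and once $u_n^+ \geq t$ the point $y_n$ lies in $R_n$. I next bridge from $R_n$ to the actual cylinder using two observations. First, $\tilde K_n := M_{x_n,r_n}(\conv(E_{\iii_n}))$ is a convex subset of $R_n$ whose $\vartheta_1(\iii_n)$-projection equals that of $R_n$, so the slice of $\tilde K_n$ at $y_n$'s $\vartheta_1(\iii_n)$-coordinate is non-empty and of $\vartheta_2(\iii_n)$-diameter at most $v_n$, giving $\dist(y_n, \tilde K_n) \leq v_n$. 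Second, by the projection condition I fix a direction $e^*$ with $\langle e^* \rangle \in \PPP^1 \setminus \vartheta_1(\Sigma)$, arranged so that $\langle e^* \rangle \neq \vartheta$. For large $n$, Remark~\ref{proj_cond_rem} together with the projection condition tells us that every line in direction $e^*$ meeting $\tilde K_n$ meets $M_{x_n,r_n}(E_{\iii_n})$. Since $\vartheta_1(\iii_n) \to \vartheta \neq \langle e^* \rangle$, the angle between $e^*$ and $\vartheta_1(\iii_n)$ is bounded away from zero, so any such chord inside $\tilde K_n$ has length $O(v_n)$. Applying this to a line through a point of $\tilde K_n$ within $v_n$ of $y_n$ yields $z_n \in M_{x_n,r_n}(E_{\iii_n})$ with $|z_n - y_n| = O(v_n) \to 0$. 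Hence $z_n \to y$ in $M_{x_n,r_n}(E)$, so $y \in W$. Taking $D := \bigcup_{x \in W}(\{x\} + \ell(\theta_x))$ and $\Lambda := \{\theta_x : x \in W\}$ delivers $W = D \cap B(0,1)$, the inclusion $\subseteq$ being immediate from $x \in \{x\} + \ell(\theta_x)$ and the converse from the argument above.

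The principal obstacle is this last transfer step: without the projection condition, $M_{x_n,r_n}(E_{\iii_n})$ could be an arbitrarily sparse subset of $R_n$, and the candidate point $y_n$ would not need to be close to $M_{x_n,r_n}(E)$ in the limit. The projection condition, through the availability of a fixed allowed direction $e^*$ making a definite angle with the shrinking direction $\vartheta_1(\iii_n)$, is precisely what forces $M_{x_n,r_n}(E_{\iii_n})$ to populate $\tilde K_n$ densely at scale $v_n$, which is the scale the weak tangent limit preserves.
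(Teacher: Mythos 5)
Your proposal is correct and follows essentially the same route as the paper: approximate near $x$ by the rectangles of Lemma \ref{lem:approx_rectengles}, extract a convergent subsequence of orientations and invoke Lemma \ref{lem:directions}\eqref{accumulation} to place the limit direction in $\vartheta_1(\Sigma)$, and then use the projection condition in a fixed direction $e^*$ with $\langle e^*\rangle\notin\vartheta_1(\Sigma)$, transverse to the narrow rectangles, to conclude that the whole half-line $(\{x\}+\ell(\theta_x))\cap B(0,1)$ lies in $W$. The only differences are cosmetic (you aim at the side of $p_n$ where the axis length tends to infinity rather than at a short side outside $B(0,1)$), and you usefully spell out, via the convex hull $\tilde K_n$ and the $O(v_n)$ chord bound, the step the paper dismisses as ``clear''.
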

\begin{proof}
 Let $M_{x_n,r_n}E$ converge to $W$ in $B(0,1)$ and fix $x\in W$. By Lemma \ref{lem:approx_rectengles}, there is a sequence $R_n = R(x_n,r_n,\iii_n) $ of approximating rectangles with $h_n \to \infty$ and $v_n \to 0$ so that $\dist(x,R_n) \to 0$. Recall that $R_n$ has orientation $\vartheta_1(\iii_n)$. Since $h_n \to \infty$, at least one of the shorter sides of $R_n$ is outside $B(0,1)$. One can now fix a direction that points to this side that is far away. To put this precise, choose a short side of $R_n$ that does not meet $B(0,1)$ and extend this line segment to an infinite line from both ends and call this line $\xi$ for now (if there are two choices for the short side, then it does not matter which one is chosen). Note also that $\xi$ is perpendicular to $\vartheta_1(\iii_n)$. Then choose $\theta_n$ so that $ \langle \theta_n \rangle = \vartheta_1(\iii_n)$ and $\{x\} + t\theta_n$ meets $\xi$ for some $t>0$. By passing to a sub-sequence, one can also assume that $\theta_n$ converges to some $\theta_x \in S^1$ and by Lemma \ref{lem:directions} part \eqref{accumulation} it holds that $\langle \theta_x \rangle \in \vartheta_1(\Sigma)$. By the projection condition there is at least one $e\in S^{1}\setminus \vartheta_1(\Sigma)$ so that $\proj^e \fii_\iii(E)$ is an interval whenever $|\iii|$ is large. Further by compactness of $\vartheta_1(\Sigma)$, the approximating rectangles $R_n$, with large $n$, have orientation bounded away from $\langle e \rangle$. Thus, by the projection condition and the choices made above, it is clear that
 \[
  \left( \{x\} + \ell(\theta_x) \right) \cap B(0,1) \subset W.
 \]
 Trivially $W \subset \bigcup_{x\in W} \{x\}$. So, by taking union over all $x\in W$, it then follows that
 \begin{equation}
  \label{eq:union}
  W = \bigcup_{x\in W} \left( \{x\} + \ell(\theta_y) \right) \cap B(0,1),
 \end{equation}
 which is exactly what was claimed.
\end{proof}

\begin{remark}
 For sure, the union in \eqref{eq:union} is not optimal, meaning that it is not necessary to take the union over all $x\in W$. In particular, if $x\in W$ then also $z_t = x + t\ell(\theta_x) \in W$ for all small $t>0$ at least. If $\ell(\theta_{z_t}) = \ell(\theta_x)$ then the union doesn't need to be over $z_t$ at all. Note however that even tough $z_t$ is on a line $\{x\} + \ell(\theta_x)$ it may be that $\ell(\theta_{z_t}) \neq \ell(\theta_x)$ due to overlap of cylinders in the original self-affine set $E$.
 %
\end{remark}

\section{Proofs of the main results}
In this section I finish the proof of Theorem \ref{thm:general_dominated_vizibility}. As discussed earlier, all that is left to do is to prove Proposition \ref{prop:general_weak_dimension}. After this it is time to focus on the Corollaries \ref{cor:carpet}, \ref{cor:cone_visible}, and \ref{cor:unique_visible}, that deal with the size of the exceptional set of directions, verifying the visibility conjecture in different special cases.

\begin{proposition}
 \label{prop:general_weak_dimension}
 Let $E$ be a self-affine set satisfying the projection condition and the dominated splitting and let $ e \in S^1 $ so that $ \langle e \rangle \not\in\vartheta_1(\Sigma)$. Then $\dimh W \leq 1$ for all $W \in \Tan( \overline{\vis^e(E)} )$.
\end{proposition}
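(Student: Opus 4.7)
The plan is to lift the problem to a weak tangent $T\in\Tan(E)$ containing $W$, apply the Kakeya--type description from Proposition~\ref{prop:general_weak} to $T$, and then bound $\dimh W$ by splitting $W$ into a visible piece inside $\overline{\vis^e T}$ and a hidden piece $W\setminus\overline{\vis^e T}$. First I would fix sequences $z_n\in\overline{\vis^e E}$ and $r_n\searrow 0$ with $M_{z_n,r_n}\overline{\vis^e E}\to W$ in $B(0,1)$, and, using compactness of the Hausdorff topology on closed subsets of $B(0,1)$, pass to a subsequence along which $M_{z_n,r_n}(E)\to T$ for some $T\in\Tan(E)$; then automatically $W\subset T$. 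Proposition~\ref{prop:general_weak} gives $T=D\cap B(0,1)$ with $D$ a Kakeya--type set whose direction set $\Lambda$ satisfies $\langle\theta\rangle\in\vartheta_1(\Sigma)$ for every $\theta\in\Lambda$. By Lemma~\ref{lem:directions} the set $\vartheta_1(\Sigma)$ is compact in $\PPP^1$, and the hypothesis $\langle e\rangle\notin\vartheta_1(\Sigma)$ yields $\delta>0$ with $\angle(\langle\theta\rangle,\langle e\rangle)\geq\delta$ for every $\theta\in\Lambda$. In coordinates with $e=(0,1)$, the half--lines of $D$ then all have slope in $[-\cot\delta,\cot\delta]$.

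For the visible piece I would define $t_{\max}(s):=\sup\{t:(s,t)\in T\}$ and note that $\vis^e T$ sits inside the graph of $t_{\max}$. Since every point of $T$ lies on a half--line in $D$ of slope at most $\cot\delta$ in absolute value, $t_{\max}$ is the pointwise supremum of affine functions of uniformly bounded slope. Splitting $\vis^e T$ into the subset $V_R$ whose associated half--line has positive horizontal component, and the subset $V_L$ whose associated half--line has negative horizontal component, the half--line inequality gives that $s\mapsto t_{\max}(s)+s\cot\delta$ is non--decreasing on $V_R$, while $s\mapsto t_{\max}(s)-s\cot\delta$ is non--increasing on $V_L$. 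Hence $\vis^e T$ is contained in the union of two graphs of functions of bounded variation, and its closure adds only countably many vertical (direction--$e$) segments corresponding to jumps of $t_{\max}$. All of these pieces are one--dimensional, giving $\dimh(W\cap\overline{\vis^e T})\leq\dimh\overline{\vis^e T}\leq 1$.

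The main obstacle will be bounding $W\setminus\overline{\vis^e T}$, and the plan is to cover this set by a countable collection of lines. Unpacking the definitions, if $x\in W\setminus\overline{\vis^e T}$ then $x$ is hidden inside $T$ --- there is $s>0$ with $x+se\in T$ and a neighbourhood of $x$ contains no visible point of $T$ --- while $x$ is still a limit of magnified visible points $\widetilde x_n=M_{z_n,r_n}(x_n)$ with $x_n\in\vis^e E$. Since the ray $\{\widetilde x_n\}+\ell(e)$ meets $M_{z_n,r_n}(E)$ only at $\widetilde x_n$, any approximating sequence $y_n\to x+se$ in $M_{z_n,r_n}(E)$ must consist of near--miss points sitting on cylinders $\fii_{\iii_n}(E)$ whose approximating rectangles, after magnification, become limit strips in $T$ in some direction $\theta^*\in\vartheta_1(\Sigma)$, using compactness of $\vartheta_1(\Sigma)$ to extract the limit orientation. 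The projection condition applied to these cylinders should force $x$ onto the boundary line of one such limit strip, and a bookkeeping argument organising cylinder configurations by scale and combinatorial type should then reduce the family of boundary lines to a countable one. This last reduction, which relies essentially on the projection condition, is where I expect the main technical difficulty to lie. Granted the covering, $\dimh(W\setminus\overline{\vis^e T})\leq 1$, and together with the previous paragraph this yields $\dimh W\leq 1$.
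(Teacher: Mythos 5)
Your setup and your treatment of the visible piece follow the paper's route: pass to a weak tangent $T\supset W$ of $E$, use Proposition \ref{prop:general_weak} together with compactness of $\vartheta_1(\Sigma)$ to get a uniform angle $\delta$ between the Kakeya directions and $e$, and then cover $\overline{\vis^e T}$ by graphs of functions with a one-sided slope bound (the paper's semi-monotone functions $f_T,f_R,f_L$ and Lemma \ref{lem:semimonotone}) plus countably many segments in direction $e$ coming from jump points. Up to bookkeeping (the paper splits $T$ into $T_T,T_R,T_L$, keeps track of which side the approximating rectangles come from, and works in a smaller ball $B(0,\gamma)$ to control projections of chords), this half is essentially the paper's argument and is fine.

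The genuine gap is in the hidden piece $W\setminus\overline{\vis^e T}$, and you flag it yourself. The covering you propose --- boundary lines of ``limit strips'' coming from approximating cylinders, made countable by organizing cylinder configurations ``by scale and combinatorial type'' --- does not work as stated: since $W\subset T$ and $T$ is a Kakeya-type set, \emph{every} point of $W$, hidden or not, already lies on a limit line of approximating rectangles (Lemma \ref{lem:approx_rectengles}), and there may be uncountably many such lines, so nothing in your sketch isolates a countable subfamily. The paper's key idea is different: the countable family consists of lines in direction $e$ over the (countably many) discontinuity points of the semi-monotone visible-height functions, plus the finitely many endpoints of their domains. Given a hidden point $\omega\in W$ not on one of these lines, one takes the visible point $x$ of $T$ on the ray from $\omega$ in direction $e$, a nearby visible point of $T$ whose associated line of $T$ extends horizontally past $\omega_1$ (this is exactly where continuity of $f_i$ at $\omega_1$ and $\omega_1\in\inter(I_i)$ are used), and the approximating rectangles $R_n$ converging to that line; the projection condition then produces a point of $M_{x_n,r_n}(E)$ inside $R_n$ on the ray from $\omega^n$, where $\omega^n\in M_{x_n,r_n}(\vis^e E)$ are the magnified visible points converging to $\omega$, contradicting visibility of $\omega^n$. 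Note that the contradiction is obtained at the level of $E$, not of $T$: it is visibility in $E$ of the approximating points, combined with the projection condition for cylinders, that forces hidden points of $W$ onto the countable family of direction-$e$ lines. This mechanism, which turns the discontinuity set of the graph functions into the exceptional set, is the step your proposal is missing, and without it the bound $\dimh(W\setminus\overline{\vis^e T})\leq 1$ is not justified.
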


As mentioned earlier the strategy is to cover $W$ with graphs of nice functions and a collection of vertical lines. With this in mind, recall some basic facts. For $f\colon\R \to \R$, let $\GG(f)$ denote the graph of $f$. That is, $\GG(f) = \{ (x,y)\in\R^2 : f(x) = y \}$. A function $f\colon \R \to \R$ satisfying $f(t) - f(s) \leq L (t-s)$ for some $L>0$ and for all $t \geq s$ is called semi-decreasing. Also, $f$ is said to be semi-increasing if $-f$ is semi-decreasing and $f$ is called semi-monotone if it is semi-decreasing or semi-increasing. The aim is to use graphs of semi-monotone functions for the coverings, so the first thing to do is to check that their graphs are nice enough.
\begin{lemma}
 \label{lem:semimonotone}
 Let $f\colon \R \to \R$ be semi-monotone. Then $\dimh \GG(f) = 1$. Further the set of discontinuity points of $f$ is at most countable.
\end{lemma}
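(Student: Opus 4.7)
The plan is to reduce a semi-monotone function to a genuine monotone function by subtracting a linear piece, and then invoke the classical facts for monotone functions.

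First I would observe that if $f$ is semi-decreasing with constant $L$, then the function $g(t) := f(t) - Lt$ satisfies $g(t) \leq g(s)$ whenever $t \geq s$; that is, $g$ is (non-strictly) decreasing. Similarly, if $f$ is semi-increasing, then $g(t) := f(t) + Lt$ is (non-strictly) increasing. In either case, $f$ decomposes as the sum of a monotone function and an affine (Lipschitz) function. This reduction is the heart of the argument: everything else follows from standard facts about monotone functions.

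For the countability of discontinuities, I would just note that $t \mapsto Lt$ is continuous, so the discontinuity set of $f$ coincides with that of $g$, and a monotone function on $\R$ has at most countably many discontinuities (at each jump the one-sided limits differ, and the collection of disjoint jump intervals on any compact subinterval is summable, hence finite outside any $\eps$-threshold; summing over $[-n,n]$ and $\eps=1/k$ yields countability).

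For the dimension statement, the map $\Phi\colon \R^2 \to \R^2$, $\Phi(t,y) = (t, y \pm Lt)$, is a bi-Lipschitz bijection and sends $\GG(g)$ onto $\GG(f)$, so $\dimh \GG(f) = \dimh \GG(g)$. Hence it suffices to prove $\dimh \GG(g) = 1$ for a monotone $g\colon \R \to \R$. By countable stability of Hausdorff dimension, I may restrict to $g$ on a compact interval $[a,b]$. The lower bound is immediate because $\proj^{\pi/2}\GG(g) = [a,b]$ and projections do not increase Hausdorff dimension. For the upper bound I would give a direct covering argument: partition $[a,b]$ into $n \approx (b-a)/r$ subintervals of length $\approx r$; on the $i$-th subinterval the monotonicity of $g$ forces $\GG(g)$ into a box of width $r$ and height $\omega_i$, where $\sum_i \omega_i \leq |g(b)-g(a)|$. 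The $i$-th box is covered by $\lesssim 1 + \omega_i/r$ balls of radius $r$, giving a total cover of cardinality $\lesssim (b-a)/r + |g(b)-g(a)|/r$. Therefore $\udimb \GG(g) \leq 1$, and since $\dimh \leq \udimb$, this yields $\dimh \GG(g) \leq 1$.

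There is no serious obstacle here; the only thing to be mindful of is that $g$ need not be continuous, so one cannot simply say the graph is a rectifiable curve. That is why I would work with the direct box-counting cover above rather than with length, and why the lower bound is obtained via projection onto the $x$-axis rather than connectedness.
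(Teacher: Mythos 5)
Your proposal is correct and follows essentially the same route as the paper: subtract the linear function $t\mapsto \pm Lt$ to obtain a monotone function, transfer the graph by the bi-Lipschitz shear $(t,y)\mapsto(t,y\mp Lt)$, and get the lower bound from the projection onto the $x$-axis and the countability of discontinuities from the monotone reduction. The only difference is that you also spell out, via the box-counting cover on compact intervals and countable stability, the fact that monotone graphs have dimension $1$, which the paper simply cites as standard.
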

\begin{proof}
 It is standard that the claim holds for monotone functions. By symmetry, it is enough to show the semi-decreasing case. So, assume that $f$ is semi-decreasing and that the involved constant is $L$. Define $\fii\colon \R \to \R$ by $\fii(t) = L \cdot t$ and consider $g = f - \fii$. Since $g$ is monotone and $\fii$ is Lipschitz, the second claim follows. Also, $\dimh \GG(g) = 1$ since $g$ is monotone. On the other hand, $\GG(g) = \Psi ( \GG(f) ) $, where $\Psi\colon\R^2 \to \R^2$ is defined by $\Psi(x,y) = (x , y - \fii(x) )$. Clearly $1 \leq \dimh \GG(f) $, since $\proj^{- \frac{\pi}{2} } \GG(f) = \R$. On the other hand, it is easy to see that $\Psi$ is bi-Lipschitz, so $1 \leq \dimh \GG(f)  = \dimh \Psi ( \GG(f) ) = \dimh \GG(g) = 1$
\end{proof}

\begin{proof}[Proof of Proposition \ref{prop:general_weak_dimension}]
 Fix a direction $e$ as in the claim and let $x_n,r_n$ be sequences so that $M_{x_n,r_n} (\overline{ \vis^e (E) }) \to W$ in $B(0,1)$. After passing to subsequence if necessary, it can also be assumed that $M_{x_n,r_n} (E)$ converges to some weak tangent $T$ in $B(0,1)$. By Proposition \ref{prop:general_weak}, the weak tangent $T$ is a Kakeya type set, so let $X\subset T$ and $\{\theta_x\}_{x\in X}\subset S^1$, so that $T = \bigcup_{x\in X} (\{x\} + \ell(\theta_x))$. By assumption $\pm e \neq \theta_x$ for all $x\in X$. Without loss of generality, assume that $e = (0,-1)$. Let $\beta = \min\{ | \sphericalangle ( \theta_x , \pm\pi/2 ) |\}$ and $\theta = \pi/2 - \beta$. Compactness of $\vartheta_1(\Sigma)$ ensures that $\beta$ is strictly positive. Still, without loss of generality, assume that $\dimh W = \dimh W \cap B(0, 2^{-1}\cos\theta)$, so it suffices to estimate the dimension of $W' = W \cap B(0, 2^{-1}\cos\theta)$. Set $\gamma := 2^{-1}\cos\theta$. The reason of focusing on this smaller ball inside $B(0,1)$, is merely a technicality and there is no need for the reader to worry about this too much. In a nutshell, if $\ell$ is a line or half line that meets $B(0,1)$, then $(\proj^{ e } \ell) \cap [-1,1]$ may be different from $\proj^{ e } (\ell \cap B(0,1))$. The choice of $\gamma$ ensures that if $\ell$ is a line or half line included in the weak tangent, and it meets $B(0,\gamma)$, then $(\proj^{ e } \ell) \cap [-\gamma,\gamma]$ equals to $[-\gamma,\gamma] \cap \proj^{ e } (\ell \cap B(0,1))$.
 
 First divide $T$ into three sets that each have nice enough geometry. Recall that $T$ consists of line segments, and only the lines that hit $B(0,\gamma)$ are meaningful. If $L$ is a collection of lines and half lines so that $\left(\bigcup_{\ell\in L} \ell\right) \cap B(0,1) = T$, and $L'\subset L$ consists of those elements that meet $B(0,\gamma)$, then set
 \begin{align*}
  L_T &= \{ \ell \in L' : \sharp( \ell \cap \partial B(0,1) ) = 2 \} \\ 
  L_R &= \{ \ell \in L' : \ell = \{ x \} + \ell(\theta_x), \text{ with } \cos\theta_x > 0, \text{ and } |x| < 1 \} \\ 
  L_L &= \{ \ell \in L' : \ell = \{ x \} + \ell(\theta_x), \text{ with } \cos\theta_x < 0, \text{ and } |x| < 1  \}
 \end{align*}
 For the lines in $L_T$ there are two possibilities. According to Lemma \ref{lem:approx_rectengles}, for $\ell\in L_T$, it may be that there exists a sequence of approximating rectangles $R_n$ converging to $\ell$ in $B(0,1)$. In this case set $\ell \in L_{TT}$. If this is not the case, then, since every approximating rectangle can have at most one short side meeting $B(0,1)$, there are two sequences of approximating rectangles, say $R_n$ and $S_n$, so that $R_n \cup S_n \to \ell$ in $B(0,1)$. (There might be many more that could be chosen, but it is enough to consider these two.) Since a small neighborhood of the vertical orientation is excluded, it makes sense to talk about left and right sides of these rectangles, referring to the shorter sides that are most right and most left. Assume that the left side of $R_n$ does not meet $B(0,1)$ and the right side of $S_n$ does not meet $B(0,1)$. By passing to subsequences, one can assume that there are $x,z\in\ell$ so that $R_n$ converges to $x + \ell(\theta_x)$ in $B(0,1)$, with $\cos\theta_x < 0$ and that $S_n$ converges to $z + \ell(\theta_z)$ in $B(0,1)$, with $\cos\theta_z > 0$. In this case, set $x + \ell(\theta_x) \in T_{TL}$ and $z + \ell(\theta_z) \in T_{TR}$.
 
 Finally, set
 \begin{equation*}
  T_T = \overline{ \bigcup_{\ell \in L_{TT}} \ell }, \qquad
  T_R = \overline{ \bigcup_{\ell \in L_R \cup L_{TR}} \ell }, \qquad
  T_L = \overline{ \bigcup_{\ell \in L_L \cup L_{TL}} \ell },
 \end{equation*}
 and $T' = T_T \cup T_L \cup T_R$. The closures are taken to ensure that $T_i\cap B(0,1)$ is a compact set for all $i\in\{T,R,L\}$. Obviously $T \cap B(0,\gamma) = T'\cap B(0,\gamma)$. Note that despite the closures, the sets $T_i$ are still collections of lines, since the closure of any set of lines in $\R^2$ is still a set of lines in $\R^2$. Most importantly, for each line in $T_i$, there still exists a sequence $R_n$ of approximating rectangles with $R_n\to T_i$, since they existed for all lines of $L_i$.
 
 Now it is time to estimate $\dimh (\vis^e T' )$. Trivially, $\vis^e T' \subset \vis^e T_T \cup \vis^e T_L \cup \vis^e T'_R$ so it suffices to consider $ \vis^e T_i$ for $i\in \{T,L,R\}$ separately. Of course, some of the sets $T_i$ may be empty, but at least one of them is nonempty since $T\cap B(0,\gamma)$ is nonempty.

 Start with $T_R$. Note that $[-\gamma,\gamma] \cap \proj^{e} T_R = [u,\gamma] =: I_R$ for some $u$. Consider the function $f_R \colon I_R \to \R$ defined by $f(x) = \min\{ y : (x,y) \in T_R \}$. Let $\Gamma$ denote the strip $[-\gamma,\gamma] \times \R$. Obviously $\GG(f_R) = \vis^e (T_R \cap \Gamma)$. Consider $s,t\in I_R$, with $s < t$. Since $(s,f(s))$ is on a line segment $\ell \in L_R$, with $ t \in [s,\gamma] \subset \proj^e \ell$, it is clear that $f(t) - f(s) \leq  \tan\theta (t-s)$, so $f_R$ is semi-decreasing, and $\dimh G(f_R) \leq 1$ by Lemma \ref{lem:semimonotone}.
 
 Similarly, define $f_L \colon I_L \to \R$ by setting $f(x) = \min\{ y : (x,y) \in T_L \}$. Again, $\GG(f_L) = \vis^e (T_L \cap \Gamma)$. Consider $s,t\in I_L$, with $s < t$. Since $(t,f(t))$ is on a line segment $\ell \in L_L$, with $ s \in [-\gamma,t] \subset \proj^e \ell$, it is clear that $f(t) - f(s) \geq - \tan\theta(t-s)$. Thus $f_L$ is semi-increasing, and $\dimh G(f_L) \leq 1$ by Lemma \ref{lem:semimonotone}.
 
 Finally, define $f_T \colon [-\gamma,\gamma] \to \R$ by $f(x) = \min\{ y : (x,y) \in T_T \}$ and note that $f_T$ is Lipschitz. All in all, the above considerations show that $\dimh W' \cap \vis^e T' \leq 1$.
 
 Then it is time to estimate $\dimh (W' \setminus \vis^e T')$. The aim is to show that $ W' \setminus \vis^e T'$ can be covered by a countable collection of vertical line segments. More specifically, by two collections of vertical lines that are parametrized by a) the discontinuity pints of $f_i$, b) the boundary points of $I_i$. Considering the first case, Lemma \ref{lem:semimonotone} showed that a semi-monotone function can have only countably many points of discontinuity. Hence, set
 \[
  L_D := \bigcup_{i=L,R,T} \bigcup \{ \{s\} \times \R : f_i \text{ is discontinuous at } s\}.
 \]
 For the second case, let $\{t_i\}_{i=1}^6$ be the endpoints of the intervals $I_k, k = T,R,L$, and set
 \[
  L_B := \bigcup_{i} \{t_i\} \times \R
 \]
 The final step is to show that $W' \setminus \vis^e T' \subset L_D \cup L_B$. If this is not the case, then there exists a point $\omega = (\omega_1,\omega_2)\in W' \setminus (\vis^e T' \cup L_D \cup L_B)$. Since $\proj^{ e } W' \subset I_T \cup I_R \cup I_L$ and $\omega$ is not visible, there exists $x = (\omega_1,x_2) \in \vis^e T' \cap \ell$ with $x_2 < w_2$ and $\ell \in T_i$ for some $i=T,R,L$.
 
 Assume first that $i = T$. Then, by the choices made above, there is a sequence $R_n$ of approximating rectangles, with side lengths $h_n \to \infty$ and $v_n \to 0$, converging to $\ell$ in $B(0,1)$. Let $\omega^n$ be a sequence so that $\omega^n \to \omega$ and $\omega^n \in M_{x_n,r_n} (\vis^e E)$. (Recall that $M_{x_n,r_n} (\overline{\vis^e E}) \cap B(0,1) \to W$.) Let $n$ be so large that $\dist (R_n,x) < (\omega^n_2 - x_2) / 2$ and $v_n / \cos \theta < (\omega^n_2 - x_2) / 2$ for all large $n$. This is possible since in both inequalities the left hand side converges to zero and $(\omega^n_2 - x_2)$ converges to $(\omega_2 - x_2) > 0$. Now, by the projection condition, there exists a point $z_n \in R_n \cap M_{x_n,r_n} (E)$ so that $z_n \in \{\omega^n\} + \ell(-\pi/2)$ implying that $\omega^n \not\in M_{x_n,r_n} (\vis^e E)$, which is a contradiction.
 
 Then assume that $i = R$. Assume also that $\omega_1 \in \inter(I_R)$ and that $f_R$ is continuous at $\omega_1$, since otherwise $\omega$ is covered by $L_D$ or $L_B$. Thus there exists $z = (z_1,z_2)\in \vis^e T_R$ with $z_1<w_1$ and $z_2 = f_R(z_1)$ and $z_2 + |z_1 - x_1| \tan\theta < (\omega_2 -x_2) /4$. Again, there is a sequence $R_n$ of approximating rectangles and points $z^n \in R_n$ with $z^n\to z$. Let $\omega^n$ be a sequence so that $w^n \to \omega$ and $\omega^n \in M_{x_n,r_n} (\overline{\vis^e E})$. When $n$ is so large that
 \begin{equation}
 \label{eq:displayformulas}
  \begin{split}
  &z_2^n + |z^n_1 - x_1| \tan\theta < x_2 + (\omega_2 -x_2) \frac{3}{8}, \\
  &v_n \cos(\theta)^{-1} < |\omega_2-x_2|/8, \\
  &\omega^n_2 - |\omega^n_1 - \omega_1| \tan\theta  >  \omega_2 - |\omega_2 - x_2| /4,
  \end{split}
 \end{equation}
 the projection condition implies that $\{\omega^n\} + \ell(-\pi/2) \cap R_n \neq \emptyset$ for all large $n$ implying that $\omega\not\in W'$. See Figure \ref{fig:explaindisplayformulas} for clarification. The case $i = L$ is symmetric to the case $i = R$.
\begin{figure}
\centering
\begin{tikzpicture}[scale=1.5]
 \draw[thick] (0,0) -- (0,4);
 \foreach \n in {1,...,3}
 {
 \draw[thick] (-0.1,\n)--(0.1,\n);
 }
 \coordinate (w) at (0,4);
 \filldraw (w) circle (0.7pt);
 \node[right] at (w) {$\omega$};
 %
 \coordinate (ww) at (0.3,3.5);
 \filldraw (ww) circle (0.7pt);
 \node[right] at (ww) {$\omega^n$};
 %
 \coordinate (x) at (0,0);
 \filldraw (x) circle (0.7pt);
 \node[right] at (x) {$x$};
 %
 \coordinate (z) at (-.4,.2);
 \filldraw (z) circle (0.7pt);
 \node[right] at (z) {$z$};
 %
 \coordinate (zz) at (-0.8,0.4);
 %
 \draw[thick, dotted] (z)--(0,1);
 %
 \begin{scope}[rotate=60 , shift={(zz)} ]
 \draw[white, top color=white, bottom color=black!50] (-0.1,-0.1) rectangle (4,0.1);
 \draw[white,thick] (-0.1,-0.1) rectangle (4,0.1);
 \node[] at (4,0) {$R_n$};
 \end{scope}
 \filldraw (zz) circle (0.7pt);
 \node[left] at (zz) {$z^n$};
 %
 %
 \draw[thick,->] (ww) -- ++ (0,-2);
\end{tikzpicture}
\caption{This picture explains the formulas \eqref{eq:displayformulas}. When the approximating rectangle $R_n$ is narrow, and $\omega^n$ is near $\omega$, and $z^n$ is near $z$, the projection condition ensures that $\omega^n$ is not visible.}
\label{fig:explaindisplayformulas}
\end{figure}
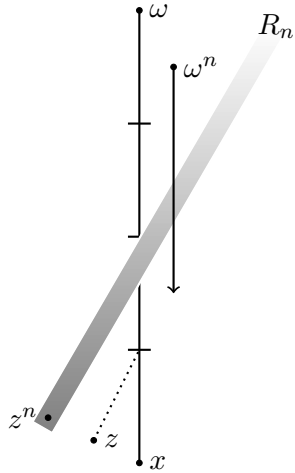
 
 The conclusion now is that $W' \subset \GG(f_T) \cup \GG(f_R) \cup \GG(f_L) \cup L_D \cup L_B$ and each element in the union has Hausdorff dimension $1$, so the proof is finished.
\end{proof}

Considering the visibility conjecture, there is still the question whether $\HH^1(\vartheta_1(\Sigma)) = 0$. For dominated self-affine carpets, this is true. A self-affine set is called a carpet if all $A_i$ are diagonal matrixes. If a carpet is dominated, then $\vartheta_1(\Sigma)$ is a singleton - it is either the horizontal or the vertical orientation. Thus theorem \ref{thm:general_dominated_vizibility} immediately implies that the visibility conjecture holds in this class.
\begin{corollary}
 \label{cor:carpet}
 Let $E$ be a self-affine carpet satisfying the projection condition and the dominated splitting. Then $\dimh \vis^e(E) = \dima \vis^e(E) = 1$ holds for all except possibly one $e \in S^1$ and its opposite $-e$.
\end{corollary}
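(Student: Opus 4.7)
The plan is to deduce the corollary directly from Theorem \ref{thm:general_dominated_vizibility} by showing that for a dominated self-affine carpet the exceptional set $\vartheta_1(\Sigma) \subset \PPP^1$ is a single point. Since a single line in $\PPP^1$ corresponds to exactly two antipodal unit vectors in $S^1$, this gives the stated form of the conclusion.

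First I would observe that the class of diagonal matrices is closed under multiplication: if every $A_i = \diag(a_i,b_i)$, then $A_\iii = \diag\bigl(\prod_{k=1}^{|\iii|} a_{i_k},\, \prod_{k=1}^{|\iii|} b_{i_k}\bigr)$ for every $\iii \in \Sigma^*$. Hence $A_\iii^T A_\iii$ is diagonal, with eigenvectors $(1,0)$ and $(0,1)$, so $\vartheta_1(\iii) \in \{\langle(1,0)\rangle, \langle(0,1)\rangle\}$ for every finite word $\iii$. Because this two-point set is closed (indeed discrete) in $\PPP^1$, and $\vartheta_1(\iii) = \lim_n \vartheta_1(\iii|_n)$ exists for every $\iii \in \Sigma$ by Lemma \ref{lem:directions}\eqref{existence}, it follows that $\vartheta_1(\Sigma) \subset \{\langle(1,0)\rangle, \langle(0,1)\rangle\}$.

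Next I would argue that $\vartheta_1(\Sigma)$ cannot in fact contain both elements. Set $s_i := \log|a_i/b_i|$, which is nonzero because the unit ball is not preserved up to isometry. A direct computation gives $\log\bigl(\alpha_1(\iii)/\alpha_2(\iii)\bigr) = |s_{i_1} + \cdots + s_{i_n}|$ for $\iii = (i_1,\ldots,i_n)$, and the domination hypothesis forces this to be at least $n \log \tau$ uniformly in $\iii$ once $n \geq n_0$. If some $s_i$ were positive and some $s_j$ negative, then a Dirichlet-type approximation argument produces, for each large $N$, positive integers $m,k$ with $m+k=N$ and $|m s_i + k s_j|$ bounded by a constant independent of $N$; the word $i^m j^k$ would then violate the required linear-in-$N$ lower bound. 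Consequently all the $s_i$ share the same sign, and after relabelling coordinates if necessary we may assume $s_i > 0$ for every $i$, so that $\vartheta_1(\iii) = \langle(1,0)\rangle$ for every $\iii \in \Sigma^*$ and therefore $\vartheta_1(\Sigma) = \{\langle(1,0)\rangle\}$.

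Combining these steps, the set $\{e \in S^1 : \langle e \rangle \in \vartheta_1(\Sigma)\}$ consists of the two antipodal unit vectors spanning this single line, and Theorem \ref{thm:general_dominated_vizibility} delivers $\dimh \vis^e(E) = \dima \vis^e(E) = 1$ for every other direction. The only substantive obstacle is the sign-consistency argument in the second paragraph; however, this is a purely combinatorial consequence of the uniform nature of the domination estimate, and the rest is routine bookkeeping based on the fact that diagonal matrices are closed under multiplication.
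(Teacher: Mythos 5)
Your proof is correct and takes essentially the paper's route: apply Theorem \ref{thm:general_dominated_vizibility} after noting that for a dominated carpet $\vartheta_1(\Sigma)$ is a single orientation (horizontal or vertical), the only difference being that the paper simply asserts this singleton fact while you supply the sign-consistency argument proving it. One small repair: the reason each $s_i=\log|a_i/b_i|$ is nonzero is not that $A_i(B(0,1))$ fails to be a ball (the carpet definition permits $|a_i|=|b_i|$), but that domination applied to the word $i^n$ would otherwise give $\alpha_1(i^n)=\alpha_2(i^n)$ --- a degenerate case your combinatorial argument already rules out in the same way.
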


To verify the visibility conjecture in a more general setting, consider the self-affine sets satisfying the ``strong cone separation'' introduced in \cite{KaenmakiShmerkin2009}: assume that there is a cone $X \subset \R^2$ so that $A_i(X) \subset \inter(X)$ and $A_i^T(X) \subset \inter(X)$, and for all $i$ and that
\begin{equation}
 \label{eq:cone_separation}
 A_i(X) \cap A_j(X) = \emptyset 
\end{equation}
for all $i\neq j$. As is intuitive, a cone is a union of set of lines through origin in $\R^2$ that have bounded angle from some fixed line. In what follows, the cone $X$ is understood as a subsets of $\R^2$ or $\PPP^1$ depending on the situation, and this should not cause any confusion. So, equivalently, a cone is an interval in the projective space $\PPP^1$. As discussed in the proof of \cite[Lemma 4.1]{KaenmakiShmerkin2009} it follows that $\eta_1(A_\iii) , \vartheta_1(A_\iii) \in X$ and $\eta_2(A_\iii) , \vartheta_2(A_\iii) \not\in X$ for all $\iii\in\Sigma^*$. Further, without loss of generality, assume that $\eta_1(\iii)$ is uniformly separated from $X^c$ for all $\iii$ independently of the length $|\iii|$. This follows simply by choosing $X'$ to be the minimal cone that includes $\cup_i ( A_i(X) \cup A_i^T(X) )$ and then applying the previous deduction to the cones $A_i(X')$. (Note that the strong cone separation holds with the cone $X'$ as well.)
\begin{corollary}
 \label{cor:cone_visible}
 Let $E$ be a self-affine satisfying the projection condition and the strong cone separation. Then $\dimh \vis^e(E) = \dima \vis^e(E) = 1$ for almost all $e\in S^1$.
\end{corollary}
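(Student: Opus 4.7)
The plan is to reduce the corollary to Theorem \ref{thm:general_dominated_vizibility} by verifying its hypotheses and by showing that the exceptional set of directions in that theorem has Lebesgue measure zero.

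First I would check that strong cone separation already implies the dominated splitting, so that Theorem \ref{thm:general_dominated_vizibility} indeed applies. This is standard: since $A_i(X) \subset \inter(X)$ for each $i$ and the family is finite, the induced projective maps uniformly contract the projective (Hilbert) metric on $X$. Iterating along $\iii\in\Sigma^*$ forces the ratio $\alpha_2(\iii)/\alpha_1(\iii)$ to decay exponentially, which is exactly domination. Given this, Theorem \ref{thm:general_dominated_vizibility} tells us that $\dimh \vis^e(E) = \dima \vis^e(E) = 1$ for every $e \in S^1$ with $\langle e\rangle \notin \vartheta_1(\Sigma)$, so the corollary reduces to the measure-theoretic statement
\[
 \HH^1\bigl( \{ e \in S^1 : \langle e \rangle \in \vartheta_1(\Sigma) \} \bigr) = 0.
\]

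The main step — and the one I expect to carry the real content — is proving this. I would exploit the self-projective structure: by Lemma \ref{lem:directions}\eqref{invariance}, $\vartheta_1(\Sigma) = \bigcup_{i=1}^\kappa A_i(\vartheta_1(\Sigma))$, so for every $n$,
\[
 \vartheta_1(\Sigma) \subset \bigcup_{\iii \in \Sigma^n} A_\iii(X).
\]
Strong cone separation \eqref{eq:cone_separation} iterates to the fact that $\{A_\iii(X)\}_{\iii\in\Sigma^n}$ is a pairwise disjoint collection of closed subintervals of $X$ (viewing $X$ as an interval in $\PPP^1$). Denote Lebesgue measure on $\PPP^1$ by $\mu$. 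Since the $A_i(X)$ are finitely many disjoint closed subintervals compactly contained in $X$, there is $\eta>0$ such that
\[
 \sum_{i=1}^\kappa \mu(A_i(X)) \leq (1-\eta)\, \mu(X).
\]
The plan is then to bootstrap this gap to every level using bounded distortion for projective maps restricted to the cone $X$: because each $A_\iii(X)$ is contained in the fixed compact cone $X$ and stays away from the points at which projective maps blow up, the derivative of $A_\iii$ on $X$ satisfies a uniform distortion bound $\sup_X |A_\iii'| \leq C \inf_X |A_\iii'|$ with $C$ independent of $\iii$. Consequently the gap ratio is preserved up to a uniform constant inside every $A_\iii(X)$:
\[
 \sum_{i=1}^\kappa \mu\bigl( A_{\iii i}(X) \bigr) \leq (1-\eta')\, \mu\bigl( A_\iii(X) \bigr),
\]
for some uniform $\eta'>0$. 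Iterating this estimate yields $\sum_{\iii\in\Sigma^n} \mu(A_\iii(X)) \leq (1-\eta')^n\mu(X)\to 0$, so $\mu(\vartheta_1(\Sigma)) = 0$. Pulling back through the 2-to-1 Lipschitz map $S^1\to\PPP^1$, $e\mapsto \langle e\rangle$, gives the required null set of directions, and Theorem \ref{thm:general_dominated_vizibility} finishes the proof.

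The main obstacle is establishing the uniform bounded distortion on $X$ for the iterated projective maps $A_\iii$; once that is in hand, the Cantor-type contraction argument producing the factor $(1-\eta')^n$ is routine. Bounded distortion itself follows from the fact that the assumption $A_i(X)\subset\inter(X)$ combined with finiteness of the IFS forces $A_i(X)$ to be at definite distance from the endpoints of $X$, hence, by the chain rule for projective derivatives, each $A_\iii$ acts with bounded derivative ratios on the compactly contained piece $X$.
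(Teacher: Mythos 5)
Your overall route is the same as the paper's: reduce to Theorem \ref{thm:general_dominated_vizibility} (the paper gets domination by citing Bochi--Gourmelon rather than re-proving it via projective contraction, which is fine), and then show that the set of limit orientations $\vartheta_1(\Sigma)$ is $\HH^1$-null by exploiting the gaps between the cones $A_i(X)$ together with a uniform bounded distortion estimate for the projective action of the products $A_\iii$ on $X$. The paper phrases the second step as porosity of the Moran-type set $Y=\vartheta_1(\Sigma)$ (hence $\dima\vartheta_1(\Sigma)<1$), while you sum the measures of the level-$n$ cylinders and get geometric decay; these are interchangeable, and your covering $\vartheta_1(\Sigma)\subset\bigcup_{\iii\in\Sigma^n}A_\iii(X)$ via Lemma \ref{lem:directions}\eqref{invariance} is correct.

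The genuine gap is in your justification of the key bounded distortion claim. The projective derivative of $A_\iii$ at a line $\langle v\rangle$ is $\alpha_1(\iii)\alpha_2(\iii)/|A_\iii v|^2$, which ranges between $\alpha_2(\iii)/\alpha_1(\iii)$ at $\langle\eta_1(\iii)\rangle$ and $\alpha_1(\iii)/\alpha_2(\iii)$ at $\langle\eta_2(\iii)\rangle$; uniform distortion on $X$ is therefore equivalent to $\langle\eta_2(\iii)\rangle$ staying a definite angle away from $X$, uniformly in $\iii$. Neither of your two arguments delivers this: (i) compact containment $A_\iii(X)\subset\inter(X)$ does \emph{not} prevent $\langle\eta_2(\iii)\rangle$ from coming within distance of order $\alpha_2(\iii)/\alpha_1(\iii)$ of $X$ (a single very anisotropic matrix can map $X$ strictly into itself while having enormous derivative ratio on $X$), and (ii) the ``chain rule'' does not help, since distortion constants of the individual factors compound along the composition and need not stay bounded. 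What actually forces $\langle\eta_2(\iii)\rangle$ uniformly away from $X$ is the transpose half of the strong cone separation, $A_i^T(X)\subset\inter(X)$: as in the paper (following K\"aenm\"aki--Shmerkin), this gives $\langle\eta_1(A_\iii)\rangle\in X$, and after passing to the slightly larger cone $X'$ one gets $\eta_1(\iii)$ uniformly interior, hence $\eta_2(\iii)$ uniformly separated from $X$; the paper then proves the distortion bound \eqref{eq:distortion} by the explicit computation in the $\eta_1(\iii),\eta_2(\iii)$ coordinates together with the bi-Lipschitz estimate \eqref{eq:tan_bi_Lip} for $\tan$. So the statement you need is true under the hypotheses and your measure-zero bootstrap then goes through, but as written the crucial step is unsupported and must be replaced by an argument that uses the transpose cone condition (or an equivalent control of the singular directions of $A_\iii$).
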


\begin{proof}
 The strong cone separation implies domination \cite[Theorem B]{BochiGourmelon2009}, so by Theorem \ref{thm:general_dominated_vizibility} it is enough to show that $\HH^1(\vartheta_1(\Sigma)) = 0$. This would certainly follow form $\dima \vartheta_1(\Sigma) < 1$ and this in turn follows if $\vartheta_1(\Sigma)$ is porous. Recall that a subset $Y$ of some metric space is porous if there are constants $r_0,\alpha>0$ so that for all $y\in Y$ and $r<r_0$, there is $x\in B(y,(1-\alpha)r)$ so that $B(x,\alpha r) \cap Y = \emptyset $. For the connection between Assouad dimension and porosity, see for example \cite{Luukkainen1998}.
 
 
 Let $X$ be the cone from \eqref{eq:cone_separation}. Since $X$ is a union of lines through origin, it can also be considered as a subset of the projective space. Recall that $\PPP^1$ is a metric space where the distance is measured by $\sphericalangle(\cdot,\cdot)$, the angle between the corresponding lines in $\R^2$ (which lies in the interval $[0,\pi/2]$ as usual). An invertible linear mapping $A\colon \R^2 \to \R^2$ can naturally be interpreted as a mapping on the projective space as well since $A(\langle v \rangle) = \langle A(v) \rangle$ by linearity. Thus the mappings $\{ A_i \}_{i=1}^\kappa$ naturally generate collections of nested compact sets inside $X$, since $A_i(X)\subset X$ for all $i$. To be more precise, I claim that setting $E_\iii =  A_\iii(X)$, the collection $\{E_\iii\}_{\iii\in\Sigma^*}$ satisfies the conditions
 \begin{labeledlist}{M}
  \item\label{Moran1} $E_{\iii_n} \subset E_{\iii|_{n-1}}$ for all $\iii\in \Sigma$ and $n\in\N$
  \item\label{Moran2} $\diam (E_{\iii|_n} ) \to 0 $ as $n\to \infty$ for all $\iii\in \Sigma$.
 \end{labeledlist}
 Part \ref{Moran1} is obviously true, but verifying part \ref{Moran2} requires some work. If these conditions hold, then the collection $\{E_\iii\}_{\iii\in\Sigma^*}$ is called a Moran construction and there is a unique compact set
 \[
  Y = \bigcup_{\iii\in\Sigma} \bigcap_{n\in\N} E_{\iii|_n}.
 \]
 Further knowledge about Moran constructions is not needed, but the interested reader can check for example\cite{KaenmakiRossi2016}. For this particular example, the key point is that by combining \ref{Moran1}, \ref{Moran2}, with Lemma \ref{lem:directions} parts \eqref{accumulation} and \eqref{invariance}, and the fact that $\eta_1(A_\iii)\in X$ for all $\iii\in\Sigma$, it follows that $Y$ equals to $\vartheta_1(\Sigma)$.
 
 Now, to see that also $\ref{Moran2}$ holds, fix $\iii\in \Sigma^n$, and $a,b\in X$. Further, fix unit vectors $t\eta_1(\iii) + s\eta_2(\iii)$ and $u\eta_1(\iii) + v\eta_2(\iii)$ so that
 \[
  a = \langle t\eta_1(\iii) + s\eta_2(\iii) \rangle\quad\text{and}\quad b = \langle u\eta_1(\iii) + v\eta_2(\iii) \rangle.
 \]
  It immediately follows that
 \begin{align*}
  A_\iii(a) &= \langle A_\iii( t\eta_1(\iii) + s\eta_2(\iii) ) \rangle = \langle t A_\iii\eta_1(\iii) + s A_\iii\eta_2(\iii) \rangle \\
  A_\iii(b) &= \langle A_\iii( u\eta_1(\iii) + v\eta_2(\iii) ) \rangle = \langle  u A_\iii\eta_1(\iii) + v A_\iii\eta_2(\iii) \rangle
 \end{align*}
 Further, it is no restriction to assume that $t$ and $u$ are non-negative. Since $\langle \eta_2(\iii) \rangle \not\in X$, there exists $\delta > 0$, so that $t,u > \delta$ and $|s|,|v| < 1-\delta$. Moreover, $\delta$ can be chosen to be independent of $a,b,\iii$ and the level $n$, since $\langle \eta_2(\iii) \rangle$ is uniformly separated from $X$. Since $\delta > 0$ is fixed, there exists $M > 1$ so that
 \begin{equation}
  \label{eq:tan_bi_Lip}
  |\gamma - \beta| \leq |\tan\gamma - \tan\beta| \leq M|\gamma - \beta|
 \end{equation}
for angles $\gamma,\beta \in [-\pi/2 + \delta/2, \pi/2 - \delta/2]$ and $M \delta \geq \pi - \delta$. Therefore, it follows that
 \begin{equation}
  \label{eq:ab_contracting}
  \sphericalangle( A_\iii(a) ,  A_\iii(b) )
  \leq
  \frac{ \|A_\iii \eta_2(\iii) \| }{ \| A_\iii \eta_1(\iii) \| } \left| \frac{ s }{ t } - \frac{ v }{ u } \right|
  =
  \frac{\alpha_2(\iii)}{\alpha_1(\iii)} \left| \frac{ s }{ t } - \frac{ v }{ u } \right|
  \leq
  \frac{\alpha_2(\iii)}{\alpha_1(\iii)} M \pi
  \leq
  \tau^{-n} M \pi,
 \end{equation}
 and $\tau^{-n}\to 0$ as $n\to \infty$, which proves \ref{Moran2}.
 
 It now suffices to show that $\{E_\iii\}_{\iii\in \Sigma^*}$, satisfies the following bounded distortion property: there are constants $k_0\in\N$ and $D > 1$ so that
 \[
  \frac{ d^*(\iii) }{ d_*(\iii) }
  \leq
  D
 \]
 for all $\iii\in\Sigma^k$ and $k \geq k_0$, where
 \[
 d^*(\iii) = \sup_{\atop{a,b\in X}{a\neq b}} \frac{\sphericalangle( A_\iii(a) , A_\iii(b)) }{\sphericalangle(a,b)}
 \qquad\text{ and }\qquad
 d_*(\iii) = \inf_{\atop{a,b\in X}{a\neq b}} \frac{\sphericalangle( A_\iii(a) , A_\iii(b)) }{\sphericalangle(a,b)}
 \]
 If the bounded distortion holds, then let $I$ be a gap between two neighboring first level cylinders $A_i(X)$ and $ A_j(X)$. Note that $I$ exists due to the strong cone separation. Let $r>0$ and $\theta \in Y$. Let $\iii$ be a finite word with $A_{\iii}(Y) \subset B(\theta,r)$ but $| A_{\iii}(Y)|\gtrsim r$. Then, due to the separation of the cones, there is a gap $G := A_{\iii}(I) \subset B(\theta,r)$ and
 \begin{align*}
  \frac{|G|}{r}
  &\gtrsim
  \frac{ d_*(\iii) |I| }{ | A_\iii(Y)| }
  \geq
  \frac{ d_*(\iii) |I| }{d^*(\iii) |Y| }
  \geq
  D^{-1}\frac{|I|}{|Y|}
 \end{align*}
 which shows that $Y$ is porous.
 
 To show the bounded distortion property, fix $\iii\in \Sigma^k$, $a,b\in Y$. There is a small annoying technicality that the angle between lines in $X$ may be realized ``outside'' $X$ if the opening angle of $X$ is larger than $\pi/2$. Therefore, let $k_0$ be so large that $\sphericalangle( A_\iii(x) ,  A_\iii(y) ) \leq \delta$ for all lines $x,y\in X$ when $|\iii|\geq k_0$ and assume $k\geq k_0$. 
 
 As before, fix unit vectors $t\eta_1(\iii) + s\eta_2(\iii)$ and $u\eta_1(\iii) + v\eta_2(\iii)$ so that
 \[
  a = \langle t\eta_1(\iii) + s\eta_2(\iii) \rangle\quad\text{and}\quad b = \langle u\eta_1(\iii) + v\eta_2(\iii) \rangle.
 \]
  where $t$ and $u$ are positive. Next, note that $ \frac{ s }{ t } = \pm\tan \sphericalangle( \langle \eta_1(\iii) \rangle , a )$ depending on if $s$ is positive or negative, and that a similar formula holds for $\frac{ v }{ u }$. Combining this with \eqref{eq:tan_bi_Lip} gives
 \begin{equation}
  \label{eq:ab_upper1}
  \sphericalangle( A_\iii(a) ,  A_\iii(b) )
  \leq
  \frac{ \|A_\iii \eta_2(\iii) \| }{ \| A_\iii \eta_1(\iii) \| } \left| \frac{ s }{ t } - \frac{ v }{ u } \right|
  =
  \frac{\alpha_2(\iii)}{\alpha_1(\iii)} \left| \frac{ s }{ t } - \frac{ v }{ u } \right|
  \leq
  \frac{\alpha_2(\iii)}{\alpha_1(\iii)} M \sphericalangle( a , b ).
 \end{equation}
 for lines $a,b\in Y$ with $\sphericalangle( a , b ) \leq \delta$. If $\sphericalangle( a , b ) > \delta$, then it could be that the angle is realized outside $X$, and the last inequality in the above estimate may not hold. In this case recalling the choice of $M$ still gives
  \begin{equation}
  \label{eq:ab_upper2}
  \sphericalangle( A_\iii(a) ,  A_\iii(b) )
  \leq
  \frac{\alpha_2(\iii)}{\alpha_1(\iii)} \left| \frac{ s }{ t } - \frac{ v }{ u } \right|
  \leq
  \frac{\alpha_2(\iii)}{\alpha_1(\iii)} M (\pi - \delta)
  \leq
  \frac{\alpha_2(\iii)}{\alpha_1(\iii)} M^2  \sphericalangle( a , b ).
 \end{equation}
 Since $\sphericalangle( A_\iii(a) , A_\iii(b) ) \leq \delta$ by the choice of $k_0$, the lower estimate can be treated as a single case. Again, relying
 on \eqref{eq:tan_bi_Lip} gives
 \begin{equation}
 \label{eq:ab_lower}
  \sphericalangle( A_\iii(a) , A_\iii(b) )
  \geq
  \frac{1}{M}\frac{ \|A_\iii \eta_2(\iii) \| }{ \| A_\iii \eta_1(\iii) \| } \left| \frac{ s }{ t } - \frac{ v }{ u } \right|
  =
  \frac{1}{M}\frac{\alpha_2(\iii)}{\alpha_1(\iii)} \left| \frac{ s }{ t } - \frac{ v }{ u } \right|
  \geq
  \frac{1}{M}\frac{\alpha_2(\iii)}{\alpha_1(\iii)} \sphericalangle( a , b ).
 \end{equation}
 Combining \eqref{eq:ab_upper1}, \eqref{eq:ab_upper2}, and \eqref{eq:ab_lower} gives
 \begin{equation}
 \label{eq:distortion}
 M^{-1} \frac{\alpha_2(\iii)}{\alpha_1(\iii)}\sphericalangle( a , b )
  \leq
  \sphericalangle( A_\iii(a) , A_\iii(b) )
  \leq
  \frac{\alpha_2(\iii)}{\alpha_1(\iii)} M^2 \sphericalangle( a , b )
 \end{equation}
 and this proves the bounded distortion with the constants $k_0$ and $M^3$.
\end{proof}

If there are not too many cylinders pointing to the same direction then it is possible to get rid of the exceptional directions, but this only works for Hausdorff dimension, since the argument uses countable stability.

\begin{corollary}
 \label{cor:unique_visible}
 Let $E$ be a self-affine satisfying the projection condition and the dominated splitting. Assume further that the sets $\{\pi \iii : \vartheta_1(\iii) = \langle e \rangle \}$ have Hausdorff dimension at most $1$ for all $e\in S^1$. Then $\dimh \vis^e(E) = 1$ for all $e\in S^1$.
\end{corollary}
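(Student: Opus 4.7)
The plan is to reduce to Theorem~\ref{thm:general_dominated_vizibility} by decomposing $E$ into a piece associated with the exceptional limit direction and countably many affine images of $E$ in which the offending direction becomes non-exceptional. Fix $e\in S^1$; the case $\langle e\rangle\not\in\vartheta_1(\Sigma)$ is handled by Theorem~\ref{thm:general_dominated_vizibility}, so assume $\langle e\rangle\in\vartheta_1(\Sigma)$. Set $K_e=\{\iii\in\Sigma:\vartheta_1(\iii)=\langle e\rangle\}$ and write $E=E_1\cup E_2$ with $E_1=\pi(K_e)$ and $E_2=\pi(\Sigma\setminus K_e)$. A direct check shows $\vis^e(X\cup Y)\subset \vis^e X\cup \vis^e Y$ for compact $X,Y$, and since $\vis^e E_1\subset E_1$ has Hausdorff dimension at most $1$ by hypothesis, it suffices to prove $\dimh\vis^e E_2\leq 1$.

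Write $\Sigma\setminus K_e=\bigcup_{m\in\N}\{\iii\in\Sigma:\sphericalangle(\vartheta_1(\iii),\langle e\rangle)>1/m\}$, an open set. Using uniform continuity of $\vartheta_1$ from Lemma~\ref{lem:directions}\eqref{continuity}, every $\iii\in\Sigma\setminus K_e$ lies in some cylinder $[\iii_k]:=\{\jjj\in\Sigma:\jjj\text{ begins with }\iii_k\}$ with the uniform bound $\sphericalangle(\vartheta_1(\jjj),\langle e\rangle)>\delta_k$ for some $\delta_k>0$ and all $\jjj\in[\iii_k]$. Since the family of all cylinders is countable, a countable subfamily $\{[\iii_k]\}_{k\in\N}$ with this property already covers $\Sigma\setminus K_e$, whence $E_2\subset\bigcup_k\fii_{\iii_k}(E)$. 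Countable stability of Hausdorff dimension then reduces the task to bounding $\dimh\vis^e\fii_{\iii_k}(E)$ for each individual $k$.

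For such a $k$, set $e_k=A_{\iii_k}^{-1}(e)/|A_{\iii_k}^{-1}(e)|$. Since $A_{\iii_k}^{-1}\ell(e)=\ell(e_k)$, a direct computation gives the identity $\vis^e\fii_{\iii_k}(E)=\fii_{\iii_k}(\vis^{e_k}E)$, and $\fii_{\iii_k}$ being an affine bijection preserves Hausdorff dimension, so it remains to see that $\dimh\vis^{e_k}E\leq 1$. The key point is that $\langle e_k\rangle\not\in\vartheta_1(\Sigma)$: Lemma~\ref{lem:directions}\eqref{invariance} gives $A_{\iii_k}\vartheta_1(\jjj)=\vartheta_1(\iii_k\jjj)$ for every $\jjj\in\Sigma$, and by choice of $[\iii_k]$ the right hand side is never $\langle e\rangle$, so $\vartheta_1(\jjj)\neq A_{\iii_k}^{-1}\langle e\rangle=\langle e_k\rangle$. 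Theorem~\ref{thm:general_dominated_vizibility} then yields $\dimh\vis^{e_k}E=1$, delivering the upper bound $\dimh\vis^e E\leq 1$. For the matching lower bound, transport the projection condition for $E$ in direction $e_k$ through $\fii_{\iii_k}$ via the convex-hull description of Remark~\ref{proj_cond_rem}: for $|\jjj|$ large $\proj^{e_k}\fii_\jjj(E)$ is a non-trivial interval, so $\proj^e\fii_{\iii_k\jjj}(E)$ is too, hence $\proj^e\vis^e E=\proj^e E$ contains a non-trivial interval and $\dimh\vis^e E\geq 1$. The only substantive step is the cylinder-cover construction in the second paragraph; uniform continuity of $\vartheta_1$ together with countability of $\Sigma^*$ dispatches it immediately, and everything else is a formal deployment of Theorem~\ref{thm:general_dominated_vizibility} and the invariance identity of Lemma~\ref{lem:directions}\eqref{invariance}.
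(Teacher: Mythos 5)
Your argument is correct and is essentially the paper's own proof: you decompose $\Sigma$ into the exceptional set $\{\iii:\vartheta_1(\iii)=\langle e\rangle\}$ (handled by the hypothesis) and countably many cylinders on which, by continuity of $\vartheta_1$ and the invariance $A_{\iii}\vartheta_1(\jjj)=\vartheta_1(\iii\jjj)$, the pulled-back direction $\langle A_{\iii_k}^{-1}e\rangle$ lies outside $\vartheta_1(\Sigma)$, then apply Theorem \ref{thm:general_dominated_vizibility} together with $\vis^e\fii_{\iii_k}(E)=\fii_{\iii_k}(\vis^{A_{\iii_k}^{-1}e}E)$ and countable stability. The only differences are cosmetic: the paper groups the cylinders into the sets $E(k^{-1},k)$ rather than choosing a countable cylinder cover directly, and your explicit lower-bound step via the transported projection condition makes precise what the paper leaves implicit.
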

\begin{proof}
 Assume that $\vartheta_1(\iii) = \langle e \rangle$ for some $\iii\in\Sigma$, since otherwise the claim for $e$ follows from Theorem \ref{thm:general_dominated_vizibility}.
 
 Divide the cylinders of $E$ into different classes according to the angle that the orientation of the cylinder has with $\langle e \rangle$. Set $I(\delta,k) = \{ \iii \in \Sigma^k : \sphericalangle( \vartheta_1(\iii\jjj) , \langle e \rangle ) > \delta \text{ for all } \jjj \in \Sigma \}$ and
 \begin{equation}
  \label{eq:delta_decomposition}
  E(\delta,k) = \bigcup_{\iii\in I(\delta,k)} \fii_{\iii}(E)
 \end{equation}
 From Lemma $\ref{lem:directions}$, it follows that $E = \bigcup_{k\in\N} E(k^{-1},k) \cup F_e$, where $F_e = \{\pi \iii : \vartheta_1(\iii) = \langle e \rangle \}$. It also follows that all the elements of the union are compact sets. The sets $E(k^{-1},k)$ are not exactly self-affine but each of them is a finite union of affine images of $E$. In particular, if $\iii\in I(k^{-1},k)$, then $\langle e \rangle \not\in \vartheta_1(\iii\jjj)$ for all $\jjj\in\Sigma$ and, by Lemma \ref{lem:directions}, $\langle A_\iii^{-1} e \rangle \not\in \vartheta_1(\Sigma)$. Thus Theorem \ref{thm:general_dominated_vizibility} gives that $\dimh \vis^{A_\iii^{-1}e} E = 1$. Recalling that, $\vis^e \fii_\iii(E)$ is an affine image of $\vis^{A_\iii^{-1}e} E$, and that Hausdorff dimension is countably stable finishes the proof.
\end{proof}

\section{Final remarks}
\label{sec:final}
This final section exhibits a few examples dealing with the sharpness of Theorem \ref{thm:general_dominated_vizibility}.
\begin{example}
\label{ex:sharp}
 Consider $f_i \colon [0,1]^2 \to [0,1]^2$ for $i=1,2,3$ with $f_1(x,y) = (3^{-1} x,2^{-1} y)$, $f_2(x,y) = (3^{-1} x,2^{-1} y) + (3^{-1},2^{-1})$, and $f_3(x,y) = (3^{-1} x,2^{-1} y) + (2\cdot 3^{-1},0)$. The  associated self-affine set $E$ is a Bedford-McMullen carpet, and it is well known that
 \[
  \dimh E = \log_2 \left( 2^{\log_3 2} + 1^{\log_3 2}  \right) = \log_2 \left( 2^{\log_3 2} + 1 \right)
 \]
 and
 \[
  \dima E = \log_2 2 + \log_3 2 = 1 + \log_3 2,
 \]
 see for example \cite{Mackay}. In particular, $1 < \dimh E < \dima E$. It is easy to see that the system is dominated and that $\vartheta_1(\Sigma) = \left< \frac{\pi}{2} \right>$.
 
 To verify the projection condition, fix $e\in S^1$ with $\langle e \rangle \neq \langle \frac{\pi}{2} \rangle$. Note that $A_\iii^{-1} (x,y) = (3^n x, 2^n y )$ for all $\iii\in \Sigma^n$, so $n_0\in\N$ can be fixed so that the angle between the x-axis and $A_\iii^{-1} (\langle e \rangle)$ is small, say smaller than $\pi/8$, for all $\iii\in\Sigma^n$, with $n\geq n_0$. By Remark \ref{proj_cond_rem}, it is now enough to show that if $\ell$ is a line so that $\sphericalangle( \ell , \langle \pi \rangle) \leq \pi/8$, and $\ell \cap K \neq \emptyset$, where $K$ is the convex hull of $E$, then also  $\ell \cap E \neq \emptyset$. So let $\ell$ be such a line. The projection of $K$ to arbitrary direction is not an interval, but since the angle between $\ell$ and $x$-axis is small, it is easy tho see that $\ell \cap \fii_{i_1}(K) \neq \emptyset$ for some $i_1\in \{1,2,3\}$. Since $\fii_{i_1}^{-1}\ell$ has even smaller angle with the $x$-axis, it follows that $\fii_{i_1}^{-1}\ell \cap K \neq \emptyset$ implies $\fii_{i_1}^{-1}\ell \cap \fii_{i_2} (K) \neq \emptyset$ for some $i_2 \in \{1,2,3\}$. Continuing in this manner yields a word $\iii = (i_1,i_2,\ldots)\in\Sigma$ so that $\ell \cap \fii_{\iii|_n}(K)$ for all $n\in\N$. Thus $\pi\iii \in \ell \cap E$, and so the projection condition holds.
 
 Since domination and projection condition are satisfied Theorem \ref{thm:general_dominated_vizibility} gives that
 \[
  1 = \dimh \vis^e E = \dima \vis^e E
 \]
 for all $e\neq \pm \frac{\pi}{2}$. However, it is also easy to see that, outside the tri-adic points on the $x$-axis, there is no vertical alignment of points of $E$. Moreover, for each the tri-adic point $t\in[0,1]$, there are at most two points, say $x$ and $y$, of $E$ with $\proj^{ \frac{\pi}{2} } x = t = \proj^{ \frac{\pi}{2} } y$. Therefore, there exist a countable set $H$, so that $H \cup \vis^{\pm \frac{\pi}{2} } E = E$. Thus $ 1 < \dimh E = \dimh \vis^{\pm \frac{\pi}{2} } E $.
 
 The weak tangent $T$ of $E$ that satisfies $\dimh T = \dima E$ is obviously $C \times [0,1]$, where $C$ is the middle thirds Cantor set. However, $\vis^{-\frac{\pi}{2}}(T) = C \times \{ 0 \}$ and so $\dimh \vis^{-\frac{\pi}{2}} T = \log_3 2$. Therefore, if $W$ is the weak tangent of $\vis^{ -\frac{\pi}{2} } E$ that has maximal dimension, then $W \neq \vis^{ -\frac{\pi}{2} } T$.
 
\end{example}

In general the visibility conjecture is false for the Assouad and box dimensions as mentioned in the introduction. The following is a concrete counterexample.

\begin{example}
 \label{ex:assouad_and_box}
 Let $A = \{0\} \cup \{S_n^{-1}\}_{n=1}^\infty$, where $S_n = \sum_{k=1}^n 1/k$. Consider firs the box dimension of just $A \subset \R$. For $\delta>0$, consired the index $n$ for which $\delta_n := (S_n)^{-1} - (S_{n+1})^{-1}$ is closest to $\delta$. Then, to cover $A$ with intervals of length $\delta$, it is essentially enough cover all of $[0,S_n^{-1}]$ and the rest can be neglected. Anyway, at least $N(\delta) \approx \delta_n^{-1} S_n^{-1}$ intervals are needed. On the other hand,
 \begin{align*}
  \delta_n = (S_n)^{-1} - (S_{n+1})^{-1}
  &=
  \frac{(n+1)^{-1}}{S_n S_{n+1}}
  \approx
  \frac{1}{n S_n^2}
 \end{align*}
 and it is an exercise to show that $S_n \approx \log n$. Thus it follows that
 \begin{align*}
  \lim_{\delta \to 0} \frac{\log N(\delta)}{-\log \delta}
  &=
  \lim_{n \to \infty} \frac{\log (\delta_n^{-1} S_n^{-1}) }{\log \delta_n^{-1}}
  =
  \lim_{n \to \infty} 1 - \frac{\log S_n}{\log n + 2\log S_n } = 1,
 \end{align*}
 which implies that $\dimb A = 1$. If one considers $K = A\times A$, essentially the same calculation shows that $\dimb K = 2$. Because $K$ is countable, $\vis^e(K) = K$ for almost all directions, and thus $\dimb \vis^e(K) = 2$ for almost all $e\in S^1$. (For dimensions $d>2$, one can of course consider $K = A^d$, the $d$ fold product of $A$.)
\end{example}

%
%

\bibliographystyle{abbrv}
\bibliography{Bibliography}
\end{document}